\newtheorem{theorem}{Theorem}[section]
\newtheorem{corollary}[theorem]{Corollary}
\newtheorem{lemma}[theorem]{Lemma}
\newtheorem{question}[theorem]{Question}
\newtheorem{remark}[theorem]{Remark}
\newenvironment{proof}[1][Proof]{\noindent\textbf{#1.} }{\ \rule{0.5em}{0.5em}}
\begin{document}

\title{Solvable subgroup theorem, length function and topological entropy}
\author{Shengkui Ye}
\maketitle

\begin{abstract}
We prove a general solvable subgroup theorem in terms of length functions.
As applications, we obtain a solvable subgroup theorem in dynamical systems:
any solvable group of finite Hirsch length acting on a smooth manifold with
uniformly positive topological entropies must be virtually $\mathbb{Z}^{n}.$
\end{abstract}

\subsection{Introduction}

The solvable subgroup theorems were established in many contexts of
mathematics, eg. Gromoll-Wolf \label{gw}, Lawson-Yau for smooth manifolds,
Bridson-Haefliger \cite{bh} for CAT(0) spaces, Gersten-Short \cite{gs} for
biautomatic groups, Conner \cite{conner} for stable norms coming from
left-invariant metrics, Prytu\l a \cite{p} for systolic complexes, and so
on. Basically, the theorems say that solvable groups acting nicely on nice
spaces are special (eg. virtually abelian or virtually abelian-by-abelian).
In the theory of dynamic systems, Hu-Shi-Wang \cite{hsw} proved that a
Heisenberg group acting on smooth Riemannian manifolds are restrictive, in
the sense that the central elements must have vanishing Lyapunov exponents
and topological entropy. In this note, we obtain a general Solvable Subgroup
Theorem in dynamic systems theory. Recall that a group virtually has a
property $P$ if some finite-index subgroup has the property $P.$

\begin{theorem}
\label{th-1}Let $G$ be a group consisting of $C^{\infty }$-diffeomorphisms
of a closed Riemannian manifold $M$ such that each non-identity element has
a positive topological entropy (resp. a positive minimal Lyapunov exponent).
Then each solvable subgroup $H<G$ of finite virtual cohomological dimension
(eg. having finite Hirsch length) is virtually abelian-by-abelian.
Furthermore, if the topological entropies (resp. Lyapunov exponents) have a
uniform positive lower bound, then $H$ is virtually a finitely generated
abelian group $\mathbb{Z}^{n}.$
\end{theorem}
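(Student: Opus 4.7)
The plan is to exhibit topological entropy (respectively the minimal Lyapunov exponent) as a length function on $G$ in the sense of the general theorem mentioned in the abstract, and then derive Theorem \ref{th-1} as a direct specialization. The positivity hypothesis on every non-identity element becomes precisely the non-degeneracy of this length function.

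First I would verify that $\ell:G\to\mathbb{R}_{\geq 0}$ defined by $\ell(g)=h(g)$ satisfies the axioms of a length function. The essential properties to establish are: the power formula $\ell(g^{n})=|n|\,\ell(g)$; conjugation invariance $\ell(aga^{-1})=\ell(g)$; and subadditivity on commuting pairs, $\ell(fg)\leq \ell(f)+\ell(g)$ whenever $fg=gf$. These are standard properties of topological entropy, with Yomdin's theorem ensuring finiteness in the $C^{\infty}$ setting. For the minimal Lyapunov exponent, the corresponding axioms follow from the chain rule for differentials, combined with simultaneous Oseledets decompositions for commuting diffeomorphisms. Non-degeneracy, $\ell(g)>0$ for $g\neq e$, is exactly the hypothesis of Theorem \ref{th-1}.

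Next I would invoke the general solvable subgroup theorem applied to $H<G$ equipped with the length function $\ell$. Since $H$ has finite virtual cohomological dimension (or finite Hirsch length), the general theorem yields at once that $H$ is virtually abelian-by-abelian, which settles the first assertion.

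For the sharper conclusion under the uniform positive lower bound $\ell(g)\geq c>0$ for all $g\neq e$, the length function becomes quasi-discrete. The strategy is to pass to a finite-index abelian-by-abelian subgroup $K\triangleleft H$ of the form $A\triangleleft K$ with $K/A$ abelian, homogenize $\ell$ via $\bar\ell(g)=\lim_{n\to\infty}\ell(g^{n})/n$, and extend it to a norm on the finite-dimensional real vector space $K\otimes_{\mathbb{Z}}\mathbb{R}$ afforded by the finite Hirsch length. The uniform lower bound forces the image of $K$ to be discrete in this normed vector space, hence a finitely generated free abelian group $\mathbb{Z}^{n}$. The main obstacle I anticipate is ruling out a genuine non-abelian extension in the abelian-by-abelian step: one must argue that a non-trivial commutator $[f,g]\in A$ in such an extension produces a sequence of conjugates $g^{k}fg^{-k}f^{-1}$ whose lengths $\ell$ decay to zero under iteration of the semidirect-product action, contradicting the uniform bound $c>0$ and forcing the extension to be abelian after passing to a further finite-index subgroup.
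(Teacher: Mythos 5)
Your first half is exactly the paper's route: topological entropy (resp.\ the maximal/minimal exponential growth rate of $\|D_xf^n\|$) satisfies $h(f^n)=|n|h(f)$, $h(gfg^{-1})=h(f)$, and $h(fg)\leq h(f)+h(g)$ for commuting $f,g$ (the last is Hu's Theorem C), so it is a length function that is purely positive by hypothesis, and the abelian-by-abelian conclusion follows from the general solvable subgroup theorem (the paper's Theorem \ref{th0}). That part is fine.

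The gap is in your treatment of the uniform lower bound. Your proposed mechanism --- that a nontrivial commutator $[f,g]\in A$ yields conjugates $g^kfg^{-k}f^{-1}$ whose lengths \emph{decay to zero} --- does not work: conjugation-invariance gives $\ell(g^kfg^{-k})=\ell(f)$, and subadditivity on the abelian normal subgroup $A$ only gives $\ell(g^kfg^{-k}f^{-1})=\ell((\phi^k-I)f)\leq 2\ell(f)$, a bound rather than decay; for a hyperbolic action $\phi$ these elements in fact grow. The actual argument (the paper's Theorem \ref{th3} via Lemmas \ref{key}, \ref{discrete}, \ref{finite}) is different in kind: one extends $\ell$ from $A\cong\mathbb{Z}^m$ to a continuous, homogeneous, subadditive, $\phi$-invariant function on $\mathbb{R}^m$; a spectral analysis then shows that any eigenvalue of modulus $\neq 1$, any nontrivial Jordan block, and any non-unit scaling of a rotation block each force the extension to \emph{vanish on some nonzero real vector}; and the discreteness hypothesis is used precisely to rule this out, via the Fritz--Gadgil--Khare--Nielsen--Silberman--Tao representation of homogeneous length functions through Banach spaces (without discreteness the extension can vanish on irrational directions, as the projection-to-an-irrational-line example in the paper shows, so this step is not free). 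The conclusion is that $\phi$ has finite order, whence $K/A$ is torsion and, being polycyclic, finite. Your sketch also has smaller defects that would need repair: $\ell$ is already homogeneous by axiom (1), so stabilizing it does nothing; $K\otimes_{\mathbb{Z}}\mathbb{R}$ is undefined for non-abelian $K$ (you mean $A\otimes_{\mathbb{Z}}\mathbb{R}$); and discreteness of the image of $A$ in a normed space only handles the kernel, not the quotient $K/A$. None of these is fatal to the theorem, but as written the final step does not close.
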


The proof is based on a study of length functions defined by the author in 
\cite{ye}. Let $G$ be a group. A real-valued function $l:G\rightarrow
\lbrack 0,\infty )$ is called a length function if

(1) $l(g^{n})=|n|l(g)$ for any $g\in G$ and $n\in \mathbb{Z};$

(2) $l(hgh^{-1})=l(g)$ for any $h,g\in G;$ and

(3) $l(ab)\leq l(a)+l(b)$ for \emph{commuting} elements $a,b.$

Length functions exist in many branches of mathematics, eg. stable word
lengths, stable norms, smooth measure-theoretic entropy, translation lengths
on $\mathrm{CAT}(0)$ spaces and Gromov $\delta $-hyperbolic spaces, stable
norms of quasi-cocycles, rotation numbers of circle homeomorphisms,
dynamical degrees of birational maps, absolute values of Margulis invariants
(cf. \cite{ye}, Section 2) and filling volumes (cf. \cite{bf}). A length
function $l$ is called purely positive if $l(g)>0$ for each torsion-free
element $g.$ If there is a constant $c>0$ such that $l(g)>c$ for any nonzero 
$l(g),$ the length function $l$ is called discrete. A group $G$ is called
purely positive or discrete if there exists a length function $l$ on $G$
with the corresponding properties.

\begin{theorem}
\label{th1}Let $G$ be a finitely generated solvable group with a purely
positively length function. Suppose that $G$ is virtually torsion-free and
any abelian subgroup $A$ has finite $\dim _{\mathbb{Q}}(A\bigotimes_{\mathbb{%
Z}}\mathbb{Q})<+\infty $. Then $G$ is virtually either abelian or a
non-nilpotent $\mathbb{C}$-linear subgroup of $H\rtimes \mathbb{Z}^{n}$ for
some integer $n$ and a finitely generated $\mathbb{Z}[\mathbb{Z}^{n}]$%
-module $H.$
\end{theorem}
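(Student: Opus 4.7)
The plan has three stages: reduce to the torsion-free case, prove a length-function lemma that forces every nilpotent subgroup of $G$ to be abelian, and carry out a Fitting and matrix-group analysis to identify $G$ virtually with the target form.

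Since $G$ is virtually torsion-free, I would first replace it with a torsion-free finite-index subgroup, still called $G$; the restriction of $l$ remains purely positive. The central combinatorial step is to show that every nilpotent subgroup $N\le G$ is abelian. Suppose $N$ has nilpotency class $c\ge 2$, and choose $a\in N$ and $b\in \gamma_{c-1}(N)$ with $z:=[a,b]\ne 1$. Since $z\in\gamma_c(N)\subseteq Z(N)$, a short induction gives $a^nba^{-n}=z^nb$ and $[a^n,b]=z^n$, with $a^nba^{-n}$ commuting with $b^{-1}$ (centrality of $z^n$). Subadditivity on commuting elements plus conjugation-invariance then yields
\[
n\cdot l(z)=l(z^n)=l([a^n,b])=l\bigl((a^nba^{-n})\,b^{-1}\bigr)\le l(b)+l(b^{-1})=2\,l(b)
\]
for every $n\ge 1$, forcing $l(z)=0$; pure positivity and torsion-freeness then give $z=1$, a contradiction.

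Applied to the Fitting subgroup $F\le G$, the lemma shows $F$ is abelian of finite $\mathbb{Q}$-rank $r$. Next I would establish $C_G(F)=F$: the Fitting subgroup of $C_G(F)$ is abelian (by the lemma), characteristic in $C_G(F)$, and hence normal in $G$, so is contained in $F$ by Fitting-maximality; as it also contains $F$ it equals $F$, so $C_G(F)/F$ is solvable with trivial Fitting and therefore trivial. Consequently $G/F$ embeds faithfully into $\mathrm{Aut}(F)\hookrightarrow GL_r(\mathbb{Q})$. By Mal\textquotesingle cev\textquotesingle s theorem on solvable linear groups, after a finite-index reduction the image of $G/F$ is upper triangular in $GL_r(\mathbb{C})$, with unipotent radical $U$. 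The preimage $\tilde U\le G$ contains $F$, and because $U$ acts unipotently on $F\otimes\mathbb{Q}$ the group $\tilde U$ embeds as upper-triangular unipotent in $GL_{r+1}(\mathbb{C})$ via the affine action, hence is nilpotent, hence abelian by the lemma, forcing $U=1$. Thus $G/F$ is virtually free abelian; after a further finite-index refinement $G_0\le G$ satisfies $G_0/F\cong \mathbb{Z}^n$, and the $\mathbb{Z}^n$-action on $F$ is faithful and diagonal.

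To finish, Hall\textquotesingle s theorem makes $F$ a finitely generated $\mathbb{Z}[\mathbb{Z}^n]$-module; setting $H:=F$ (enlarged to a finitely generated $\mathbb{Z}[\mathbb{Z}^n]$-module absorbing the $2$-cocycle, after a final finite-index passage to split the extension), one embeds $G_0\hookrightarrow H\rtimes \mathbb{Z}^n\subseteq \mathrm{Aff}(H\otimes\mathbb{C})\subseteq GL_{r+1}(\mathbb{C})$, giving $\mathbb{C}$-linearity. Either the $\mathbb{Z}^n$-action on $H$ is trivial and $G_0$ is abelian, or it is non-trivial — in which case, being diagonal and faithful, it precludes $G_0$ from being nilpotent. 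The main obstacle is the matrix-group analysis of $G/F$: both the identification $C_G(F)=F$ and the vanishing of the unipotent radical rely on a careful lifting of nilpotent image-data back into $G$ so that the combinatorial lemma can be applied, and the finite-rank hypothesis on abelian subgroups is essential for working inside a fixed $GL_r(\mathbb{Q})$.
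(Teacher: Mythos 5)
Your overall strategy coincides with the paper's: pass to a torsion-free finite-index subgroup, take a self-centralizing abelian normal subgroup (you use the Fitting subgroup where the paper uses a maximal abelian normal subgroup --- equivalent here), embed the quotient in $\mathrm{GL}_r(\mathbb{Q})$, triangularize via Mal'cev, kill unipotents using the fact that a purely positive length function cannot exist on a nonabelian nilpotent group, and then invoke the embedding theorem for finitely generated metabelian groups. Your commutator computation $l(z^{n})=l\bigl((a^{n}ba^{-n})b^{-1}\bigr)\le 2l(b)$ is exactly the mechanism behind the paper's Lemma 3.1 (cited from \cite{ye}), and proving it directly for arbitrary (not necessarily finitely generated) nilpotent subgroups actually streamlines the elimination of the unipotent radical, which the paper handles element by element with a finite-subset trick.

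There is one genuine gap, in the final $\mathbb{C}$-linearity step. You propose to get linearity from $G_{0}\hookrightarrow H\rtimes\mathbb{Z}^{n}\subseteq\mathrm{Aff}(H\otimes\mathbb{C})\subseteq\mathrm{GL}_{r+1}(\mathbb{C})$, but the module $H$ produced by the embedding theorem (the one ``absorbing the $2$-cocycle'') is only finitely generated over $\mathbb{Z}[\mathbb{Z}^{n}]$; as an abelian group it can have infinite rank, so $H\otimes\mathbb{C}$ need not be finite-dimensional and $\mathrm{Aff}(H\otimes\mathbb{C})$ is not contained in any $\mathrm{GL}_{r+1}(\mathbb{C})$. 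The hypothesis that abelian subgroups of $G$ have finite rank controls $F$, not the enlarged ambient module $H$. The paper avoids this by citing Levi\v{c} and Remeslennikov: every finitely generated torsion-free metabelian group is linear over a field of characteristic zero. Your argument up to ``$G_{0}$ is finitely generated torsion-free metabelian and embeds in $H\rtimes\mathbb{Z}^{n}$'' is sound, so substituting that citation for the affine-action embedding closes the gap. A smaller point worth a sentence in a write-up: the Fitting subgroup of a general solvable group need not be nilpotent a priori, so to conclude $F$ is abelian you should note that it is a directed union of products of normal abelian subgroups, each such product being nilpotent by Fitting's theorem and hence abelian by your lemma.
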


\begin{theorem}
\label{th3}Every solvable subgroup of finite virtual cohomological dimension
in a discretely purely positive group is a finite extension of $\mathbb{Z}%
^{n}.$
\end{theorem}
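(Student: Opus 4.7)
The plan is to pass to a tractable finite-index subgroup, apply Theorem~\ref{th1} to get a structural dichotomy, and then treat each case. First, I would pass to a finite-index torsion-free subgroup $H_{0}\leq H$ of finite cohomological dimension (available by the definition of finite vcd). The key preliminary observation is that any torsion-free abelian subgroup $A\leq H_{0}$ has finite $\mathbb{Q}$-rank $r$ (since $\mathrm{cd}(A)<\infty$ bounds the rank for abelian $A$); extending $l|_{A}$ by $\mathbb{Q}$-homogeneity to $A\otimes \mathbb{Q}\cong \mathbb{Q}^{r}$ and then by continuity to a seminorm on $\mathbb{R}^{r}$, discreteness of $l$ realizes $A$ as a discrete subgroup of $(\mathbb{R}^{r},l)$, and equivalence of norms on a finite-dimensional real vector space forces $A\cong \mathbb{Z}^{k}$ with $k\leq r$. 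In particular, every abelian subgroup of $H_{0}$ is finitely generated; by the classical result that a solvable group with every abelian subgroup finitely generated is polycyclic, $H_{0}$ is finitely generated. Theorem~\ref{th1} then applies: either $H_{0}$ is virtually abelian, in which case the preliminary observation applied to a finite-index abelian subgroup immediately yields $H_{0}$ virtually $\mathbb{Z}^{k}$; or $H_{0}$ is virtually a non-nilpotent $\mathbb{C}$-linear subgroup of $N\rtimes \mathbb{Z}^{n}$ for some finitely generated $\mathbb{Z}[\mathbb{Z}^{n}]$-module $N$.

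The main obstacle is ruling out the semidirect case. Since $N$ is abelian of finite $\mathbb{Q}$-rank $r$, the same extension procedure gives a seminorm on $N\otimes \mathbb{R}\cong \mathbb{R}^{r}$ invariant under the integer-matrix action of each $t\in \mathbb{Z}^{n}$. If some $t$ acts with an eigenvalue $\lambda$ of $|\lambda|\neq 1$, the associated generalized eigenspace $W$ (taken as a union of Galois orbits, hence $\mathbb{Q}$-rational because the characteristic polynomial has integer coefficients) satisfies $W\cap N\neq 0$; since $t$ or $t^{-1}$ contracts $W$ in Euclidean norm while preserving $l$, one gets $l(w)=l(t^{\mp k}w)\leq C\|t^{\mp k}w\|\to 0$ for every $w\in W$, so $l\equiv 0$ on $W$, contradicting pure positivity at a nonzero lattice point of $W\cap N$. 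Hence every $t$ acts with eigenvalues that are roots of unity, and after passing to a finite-index subgroup of $\mathbb{Z}^{n}$ the action on $N$ is unipotent. If this unipotent action is non-trivial, a Heisenberg-type relation $te_{2}t^{-1}=e_{1}+e_{2}$ arises, and conjugation invariance gives $l(ne_{1}+e_{2})=l(e_{2})$ for all $n\in\mathbb{Z}$; the triangle inequality for commuting elements then forces $|n|\,l(e_{1})\leq 2\,l(e_{2})$, so $l(e_{1})=0$, again contradicting pure positivity on the torsion-free element $e_{1}$. Therefore the action of $\mathbb{Z}^{n}$ on $N$ factors through a finite group, whence $H_{0}$ is virtually a direct product $N\times \mathbb{Z}^{n'}$, i.e., virtually abelian, reducing to the first case. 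The delicate points are the Galois-rationality argument for the invariant subspace and the interweaving of the eigenspace analysis with the integer-lattice structure of $N$, since generic lattice vectors need not be eigenvectors.
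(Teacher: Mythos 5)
Your overall skeleton (pass to a torsion-free finite-index subgroup, reduce to an abelian-by-abelian extension, then kill the action) matches the paper's, but the step that rules out the semidirect case has a genuine gap. You propose to take the eigenvalue $\lambda$ with $|\lambda|\neq 1$, pass to the $\mathbb{Q}$-rational invariant subspace $W$ spanned by its Galois orbit, and argue that $t$ or $t^{-1}$ contracts $W$. This fails: the Galois conjugates of an algebraic integer of norm $\neq 1$ can themselves lie on the unit circle (Salem numbers), so $W$ typically contains eigenvalues of norm $1$ as well, and neither $t^{k}$ nor $t^{-k}$ sends a generic $w\in W$ to $0$. The paper itself exhibits exactly this situation: Conner's matrix
\begin{equation*}
A=\begin{bmatrix} 0 & 0 & 0 & -1 \\ 1 & 0 & 0 & 2 \\ 0 & 1 & 0 & -1 \\ 0 & 0 & 1 & 2 \end{bmatrix}
\end{equation*}
is irreducible (so the only rational invariant subspaces are $0$ and $\mathbb{Q}^{4}$), has eigenvalues both on and off the unit circle, and $\mathbb{Z}^{4}\rtimes_{A}\mathbb{Z}$ carries a \emph{purely positive} length function (Lemma \ref{stableword}) while being far from virtually abelian. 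Since your contraction argument uses only pure positivity at that stage (discreteness enters your write-up only to get finite generation of abelian subgroups), it would ``prove'' that this group is virtually abelian, which is false. So the delicate point you flagged is not merely delicate; it is where the proof breaks.

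The missing ingredient is precisely where the discreteness hypothesis must be spent: Lemma \ref{discrete} of the paper uses the Fritz--Gadgil--Khare--Nielsen--Silberman--Tao embedding of a homogeneous length function into a Banach space to show that if $l|_{\mathbb{Z}^{n}}$ is \emph{discretely} purely positive, then the continuous extension of $l$ to $\mathbb{R}^{n}$ (Lemma \ref{key}) is purely positive on all of $\mathbb{R}^{n}$, not just on rational vectors. With positivity on every real vector one can test $l$ directly on genuine (irrational) eigenvectors and on the real Jordan blocks: $l(v)=|\lambda| l(v)$ kills eigenvalues of norm $\neq 1$, the rotation-block and Jordan-block computations kill the rest, and one concludes each element of the quotient acts with finite order (Lemma \ref{finite}); polycyclicity of solvable subgroups of $\mathrm{GL}_{n}(\mathbb{Z})$ then makes the torsion quotient finite. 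Your unipotent (Heisenberg) step is fine and matches the paper, and your reduction showing abelian subgroups are $\mathbb{Z}^{k}$ is essentially Lemma \ref{discrete} in disguise, but without the upgrade of positivity from the lattice to the whole real vector space the eigenvalue analysis cannot be completed.
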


The following question was asked by Hu-Shi-Wang \cite{hsw} (Question 1.3).

\begin{question}
Let $H=\langle f,g,h\mid fh=hf,gh=hg,[f,g]=h\rangle $ be the Heisenberg
group, acting on a compact $C^{\infty }$-Riemannian manifold $M$ preserving
a Borel probability measure $\mu .$ Is the norm $\Vert Dh_{x}^{n}\Vert $
bounded by $e^{\sqrt{n}\varepsilon }$ for some $\varepsilon >0,$ or even by
a polynomial in $n$ for $\mu $-a.e. $x\in M$?
\end{question}

We give a positive answer to this question in the case of exponential
function.

\begin{theorem}
\label{th5}There exist constants $K,C>0$ such that 
\begin{equation*}
\log \Vert Dh_{x}^{n}\Vert \leq K\sqrt{n}+C
\end{equation*}%
for any $x\in M.$
\end{theorem}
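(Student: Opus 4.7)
The plan is to exploit the Heisenberg identity $h^{n^{2}}=[f^{n},g^{n}]$, which holds because $h=[f,g]$ is central in $H$, and so $f^{a}g^{b}=h^{ab}g^{b}f^{a}$ gives $[f^{a},g^{b}]=h^{ab}$. Specializing $a=b=n$ and transferring the group relation to the diffeomorphism action produces the equality of maps $M\to M$,
\begin{equation*}
h^{n^{2}}\;=\;f^{n}\circ g^{n}\circ f^{-n}\circ g^{-n}.
\end{equation*}
Differentiating, applying the chain rule, and using the trivial bound $\Vert D\phi^{n}\Vert _{\infty }\leq \Vert D\phi \Vert _{\infty }^{n}$ (valid for any $C^{\infty}$-diffeomorphism $\phi$ of the compact manifold $M$) yields
\begin{equation*}
\log \Vert D(h^{n^{2}})_{x}\Vert \;\leq \;Ln
\end{equation*}
uniformly in $x\in M$, where $L:=\log \bigl(\Vert Df\Vert _{\infty }\Vert Dg\Vert _{\infty }\Vert Df^{-1}\Vert _{\infty }\Vert Dg^{-1}\Vert _{\infty }\bigr)$. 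This already gives the desired $\sqrt{\cdot}$-type bound along the subsequence of perfect squares $m=n^{2}$.

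To interpolate to arbitrary $m\geq 1$, I would choose $n=\lceil \sqrt{m}\,\rceil $ and write $m=n^{2}-r$ with $0\leq r\leq 2n-1\leq 2\sqrt{m}+1$. Then $h^{m}=h^{n^{2}}\circ h^{-r}$, and a second application of the chain rule produces
\begin{equation*}
\log \Vert D(h^{m})_{x}\Vert \;\leq \;Ln+r\log \Vert Dh^{-1}\Vert _{\infty }\;\leq \;K\sqrt{m}+C
\end{equation*}
for suitable constants $K,C>0$ depending only on $L$ and $\log \Vert Dh^{-1}\Vert _{\infty }$. This is the statement of Theorem \ref{th5}.

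The argument uses no ergodic theory or measure-theoretic input whatsoever; only compactness of $M$ and smoothness of the generators enter, and the resulting bound is uniform in $x$ rather than merely $\mu$-a.e. The real content is the algebraic identity: it replaces a single large power $h^{m}$ by an alternating product of $\sim 4\sqrt{m}$ generators, each contributing a bounded amount to $\log \Vert D\cdot \Vert $. Accordingly, the main conceptual step — and essentially the only obstacle — is recognising that the quadratic vanishing of the commutator in the Heisenberg group is exactly what converts the pointwise exponential growth of a composition of $4n$ diffeomorphisms into a $\sqrt{m}$ bound on the central element. The same identity also appears to be the reason this simple method cannot push below $\sqrt{m}$: the polynomial bound sought in the second half of the question would seem to require genuine cancellation among the four factors, beyond the submultiplicative operator-norm estimate used here.
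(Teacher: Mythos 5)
Your proof is correct, and it rests on the same two pillars as the paper's: the subadditivity of $\phi \mapsto \log \Vert D\phi \Vert _{\infty }$ under composition (chain rule plus compactness of $M$), and the fact that central powers of $h$ have word length $O(\sqrt{m})$ in $\{f^{\pm 1},g^{\pm 1}\}$. The difference lies in how the word-length input is supplied. The paper packages $\log \max \{\Vert D\phi \Vert _{\infty },\Vert D\phi ^{-1}\Vert _{\infty }\}$ as a semi-norm on $H$ and invokes Blachere's exact formula $|c^{m}|_{S}=2\lceil 2\sqrt{m}\rceil $ (Lemma \ref{nilword}, via Lemma \ref{good}), so no interpolation between perfect squares is needed; you instead exhibit the explicit word $[f^{n},g^{n}]=h^{n^{2}}$ of length $4n$ and then interpolate. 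Your route is entirely self-contained --- only the upper bound on the word length is needed, and the commutator identity delivers it for free --- whereas the paper's route proves the more general statement that \emph{every} semi-norm on $H$ satisfies $L(c^{m})\leq K\sqrt{m}+C$. The two witnesses in fact coincide: Blachere's formula gives $|c^{n^{2}}|_{S}=4n$, so your commutator word is length-minimizing. One small point to make the final display airtight: you should observe that $\log \Vert Dh^{-1}\Vert _{\infty }\geq 0$ (a diffeomorphism of a closed Riemannian manifold cannot strictly contract every tangent vector, since it is onto), or simply replace that factor by its positive part, so that the term $r\log \Vert Dh^{-1}\Vert _{\infty }$ is indeed controlled by $r\leq 2\sqrt{m}+1$ with a nonnegative coefficient.
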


\section{Semi-direct product}

Let $A\in \mathrm{GL}_{n}(\mathbb{Z})$ be a matrix and $G=\mathbb{Z}%
^{n}\rtimes _{A}\mathbb{Z}$ the semi-direct product.

\begin{lemma}
\label{key}Any length function $l:G\rightarrow \mathbb{R}_{\geq 0}$ can be
extended to be a continuous length function $l:\mathbb{R}^{n}\mathbb{%
\rightarrow R}_{\geq 0}$ satisying the following properties:

1) (subadditive) $l(r_{1}+r_{2})\leq l(r_{1})+l(r_{2})$ for any $%
r_{1},r_{2}\in \mathbb{R}^{n}.$

2) (homogenuous) $l(ar)=|a|l(r)$ for any $a\in \mathbb{R}$ and $r\in \mathbb{%
R}^{n}.$

3) (conjugation invariant) $l(r)=l(Ar)$ for any $r\in \mathbb{R}^{n}.$
\end{lemma}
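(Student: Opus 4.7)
The plan is to work in three stages: restrict $l$ to the normal subgroup $\mathbb{Z}^{n}\subset G$, extend the restriction by rational homogeneity to $\mathbb{Q}^{n}$, and finally extend by continuity to $\mathbb{R}^{n}$. The only non-formal input will be a Lipschitz estimate on $\mathbb{Q}^{n}$.

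First, I would observe that on the abelian normal subgroup $\mathbb{Z}^{n}\subset G$ all elements commute, so the three length-function axioms collapse to: $l(kv)=|k|\,l(v)$ for $k\in\mathbb{Z}$, $l(v+w)\leq l(v)+l(w)$, and $l(Av)=l(v)$ (the last using axiom (2) together with the fact that conjugation by the generator of the $\mathbb{Z}$ factor acts on $\mathbb{Z}^{n}$ by $A$). Next, for $v\in\mathbb{Q}^{n}$, write $v=w/k$ with $w\in\mathbb{Z}^{n}$ and $k\in\mathbb{Z}_{>0}$, and set $l(v):=l(w)/k$. Well-definedness is immediate from $k$-homogeneity on $\mathbb{Z}^{n}$: if $w/k=w'/k'$ then $k'w=kw'$, so $k'l(w)=kl(w')$. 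Subadditivity on $\mathbb{Q}^{n}$ follows by choosing a common denominator and applying subadditivity on $\mathbb{Z}^{n}$; rational homogeneity and $A$-invariance transfer by the same computation.

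The decisive step is a Lipschitz bound. Let $e_{1},\dots,e_{n}$ be the standard basis and put $C:=\max_{i}l(e_{i})$. For $v=\sum a_{i}e_{i}\in\mathbb{Z}^{n}$, iterated subadditivity and integer homogeneity give
\[
l(v)\;\leq\;\sum_{i=1}^{n}|a_{i}|\,l(e_{i})\;\leq\;Cn\,\|v\|_{\infty}.
\]
Extending this bound to $\mathbb{Q}^{n}$ by homogeneity and applying subadditivity in the form $l(v)\leq l(u)+l(v-u)$ (and symmetrically) yields
\[
|l(v)-l(u)|\;\leq\;l(v-u)\;\leq\;Cn\,\|v-u\|_{\infty}\qquad (v,u\in\mathbb{Q}^{n}).
\]
Thus $l$ is Lipschitz, hence uniformly continuous, on the dense subset $\mathbb{Q}^{n}\subset\mathbb{R}^{n}$, so it admits a unique continuous extension to all of $\mathbb{R}^{n}$.

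Finally, I would promote the three properties to $\mathbb{R}^{n}$ by continuity: subadditivity and conjugation invariance $l(Ar)=l(r)$ hold on the dense rational set and pass to limits, while $\mathbb{Q}$-homogeneity together with continuity upgrades to $l(ar)=|a|\,l(r)$ for every $a\in\mathbb{R}$. The main obstacle is really the Lipschitz estimate, and that reduces to the boundedness of $l$ on the finite generating set $\{e_{1},\dots,e_{n}\}$; everything else is formal manipulation of the axioms and a density argument.
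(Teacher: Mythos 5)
Your proposal is correct and follows essentially the same route as the paper: extend by rational homogeneity to $\mathbb{Q}^{n}$, establish the bound $|l(v)-l(u)|\leq\sum_{i}|v_{i}-u_{i}|\,l(e_{i})$ (which is exactly your Lipschitz estimate, stated in the paper as a Cauchy-sequence argument), and pass to $\mathbb{R}^{n}$ by continuity. The only cosmetic difference is that you invoke the standard unique-continuous-extension principle for uniformly continuous functions on a dense subset, where the paper verifies directly that Cauchy sequences are sent to Cauchy sequences.
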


\begin{proof}
For any $q=(x_{1},\cdots ,x_{n})^{T}\in \mathbb{Q}^{n}$, let $d$ be the
least common multiple of the denominators of $x_{i}.$ Define $l(q)=\frac{1}{d%
}l(dq).$ For any integer $k>0,$ we have $l(kq)=\frac{1}{|k|d}l(dkq)$ and $%
l(q)=\frac{1}{k}l(kq).$ Therefore, $l(rq)=|r|l(q)$ for any rational number $%
r $ and any element $q\in \mathbb{Q}^{n}.$ Suppose that $\{q_{i}\}\subset 
\mathbb{Q}^{n}$ is a Cauchy sequence. For any $\varepsilon >0,$ there exists 
$N$ such that when $k,m>N,$ we have%
\begin{equation*}
\Vert q_{k}-q_{m}\parallel <\varepsilon .
\end{equation*}%
Suppose that the $i$-th component $(q_{k})_{i}=\frac{r_{ki}}{s_{ki}}$ for
coprime integers $r_{ki},s_{ki}.$ Denote by $\{e_{1},...,e_{n}\}$ the
standard basis of $\mathbb{Z}^{n}.$ Then 
\begin{eqnarray*}
|l(q_{k})-l(q_{m})| &=&|\frac{1}{\func{lcm}(s_{k1},s_{k2},...,s_{kn})}l(%
\func{lcm}(s_{k1},s_{k2},...,s_{kn})(\frac{r_{k1}}{s_{k1}},\frac{r_{k2}}{%
s_{k2}},...,\frac{r_{kn}}{s_{kn}}))-l(q_{m})| \\
&\leq &\sum_{i=1}^{n}l((\frac{r_{ki}}{s_{ki}}-\frac{r_{mi}}{s_{mi}})e_{i}) \\
&\leq &\sum_{i=1}^{n}|\frac{r_{ki}}{s_{ki}}-\frac{r_{mi}}{s_{mi}}|l(e_{i}) \\
&<&n\varepsilon Max\{l(e_{1}),...,l(e_{n})\}.
\end{eqnarray*}%
This proves that $\{l(q_{i})\}$ is a Cauchy sequence. For any $r\in \mathbb{R%
}^{n},$ choose a rational sequence $q_{i}\rightarrow r$ and define $%
l(r)=\lim l(q_{i}).$ For any $r_{1},r_{2}\in \mathbb{R}^{n},$ let rational
sequences $q_{1i}=\frac{r_{1i}}{s_{1i}}\rightarrow r_{1},q_{2i}=\frac{r_{2i}%
}{s_{2i}}\rightarrow r_{2}$ (here $r_{1i},r_{2i}\in \mathbb{Z}%
^{n},s_{1i},s_{2i}\in \mathbb{Z}$). We have that 
\begin{eqnarray*}
l(q_{1i}+q_{2i}) &=&l(\frac{r_{1i}}{s_{1i}}+\frac{r_{2i}}{s_{2i}}) \\
&=&l(\frac{r_{1i}s_{2i}+r_{2i}s_{1i}}{s_{1i}s_{2i}})=\frac{1}{s_{1i}s_{2i}}%
l(r_{1i}s_{2i}+r_{2i}s_{1i}) \\
&\leq &\frac{1}{s_{1i}s_{2i}}(l(r_{1i}s_{2i})+l(r_{2i}s_{1i})) \\
&=&l(\frac{r_{1i}}{s_{1i}})+l(\frac{r_{2i}}{s_{2i}})=l(q_{1i})+l(q_{2i}).
\end{eqnarray*}%
Therefore,%
\begin{eqnarray*}
l(r_{1}+r_{2}) &=&\lim l(q_{1i}+q_{2i}) \\
&\leq &\lim l(q_{1i})+\lim l(q_{2i})=l(r_{1})+l(r_{2}).
\end{eqnarray*}%
This proves the subadditivity of $l$ on $\mathbb{R}^{n}.$ From the
definition, we obviously have 2). Since the action of $A$ on $\mathbb{Z}^{n}$
is linear, the action extends obviously to $\mathbb{R}^{n},$ which gives 3).
\end{proof}

\bigskip

A similar proof shows the following.

\begin{lemma}
Any length function $l:\mathbb{Z}^{n}\rightarrow \mathbb{R}_{>0}$ can be
extended to be a unique continuous length function $l^{\prime }:\mathbb{R}%
^{n}\rightarrow \mathbb{R}_{>0}.$
\end{lemma}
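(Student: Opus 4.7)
The plan is to reuse the argument of Lemma \ref{key} almost verbatim, with two simplifications and one addition: there is no ambient semidirect-product structure, so no conjugation-invariance condition needs to be checked, but we now owe an extra uniqueness argument that was not required before.

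First I would extend $l$ to $\mathbb{Q}^{n}$ by the formula $l(q)=\frac{1}{d}l(dq)$, where $d$ is any positive integer with $dq\in \mathbb{Z}^{n}$, e.g. the least common multiple of the denominators of the coordinates of $q$. Well-definedness is immediate from axiom (1) applied on $\mathbb{Z}^{n}$: if $d_{1}q,d_{2}q\in \mathbb{Z}^{n}$, then $l(d_{1}d_{2}q)=d_{1}l(d_{2}q)=d_{2}l(d_{1}q)$. The same computation gives rational homogeneity $l(rq)=|r|l(q)$ for $r\in \mathbb{Q}$, and rational subadditivity follows by clearing denominators exactly as in the proof of Lemma \ref{key}. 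The Cauchy estimate used there,
\[
|l(q_{k})-l(q_{m})|\leq \sum_{i=1}^{n}\Bigl|(q_{k})_{i}-(q_{m})_{i}\Bigr|\,l(e_{i})\leq n\,\|q_{k}-q_{m}\|_{\infty }\max_{i}l(e_{i}),
\]
goes through unchanged, so $l$ is Lipschitz on $\mathbb{Q}^{n}$ with respect to the sup norm and, in particular, sends Cauchy sequences to Cauchy sequences. Hence $l(r):=\lim l(q_{i})$ is well-defined for any rational sequence $q_{i}\to r$, and passing subadditivity and homogeneity to the limit extends these properties to $\mathbb{R}^{n}$, giving a continuous length function $l^{\prime }$.

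The only genuinely new step is uniqueness, which I expect to be the main (though mild) obstacle since one must check that axiom (1) already pins the extension down on a dense set. Suppose $l^{\prime \prime }:\mathbb{R}^{n}\to \mathbb{R}_{\geq 0}$ is any continuous length function extending $l$. For $q=r/d\in \mathbb{Q}^{n}$ with $r\in \mathbb{Z}^{n}$ and $d\in \mathbb{Z}_{>0}$, axiom (1) forces $d\,l^{\prime \prime }(q)=l^{\prime \prime }(dq)=l(dq)$, so $l^{\prime \prime }$ is determined on $\mathbb{Q}^{n}$ and agrees there with $l^{\prime }$. By continuity and density of $\mathbb{Q}^{n}$ in $\mathbb{R}^{n}$, we conclude $l^{\prime \prime }=l^{\prime }$ on all of $\mathbb{R}^{n}$. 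Finally, one observes that the integer homogeneity $l^{\prime }(kx)=|k|l^{\prime }(x)$ for $k\in \mathbb{Z}$ combined with continuity automatically upgrades to full real homogeneity, so that $l^{\prime }$ does indeed satisfy the length-function axioms on the additive group $\mathbb{R}^{n}$.
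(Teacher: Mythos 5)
Your proposal is correct and follows essentially the same route as the paper: existence by the clearing-denominators extension to $\mathbb{Q}^{n}$ and Cauchy-sequence passage to $\mathbb{R}^{n}$ exactly as in Lemma \ref{key}, and uniqueness from continuity plus density of $\mathbb{Q}^{n}$. Your explicit observation that axiom (1) forces any extension to agree with $l$ on $\mathbb{Q}^{n}$ is just a spelled-out version of the paper's one-line uniqueness remark.
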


\begin{proof}
The existence is the same as in the proof of Lemma \ref{key}. In other
words, for any $q=(x_{1},\cdots ,x_{n})^{T}\in \mathbb{Q}^{n}$, let $d$ be
the least common multiple of the denominators of $x_{i}.$ Define $l(q)=\frac{%
1}{d}l(dq).$ For any $r\in \mathbb{R}^{n},$ choose a rational sequence $%
q_{i}\rightarrow r$ and define $l(r)=\lim l(q_{i}).$ The uniqueness of $%
l^{\prime }$ comes from the fact that a continuous function on $\mathbb{R}%
^{n}$ depends only on its image on $\mathbb{Q}^{n}.$
\end{proof}

\begin{lemma}
\label{norm1}Let $A\in \mathrm{GL}_{n}(\mathbb{Z})$ be a ($\mathbb{C}$%
-)diagonalizable matrix without eigenvalues of norm $1.$ Then any length
function $l$ of $G=\mathbb{Z}^{n}\rtimes _{A}\mathbb{Z}$ vanishes on $%
\mathbb{Z}^{n}.$
\end{lemma}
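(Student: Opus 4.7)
The plan is to reduce the statement to showing that a certain seminorm on $\mathbb{R}^n$ must vanish. First I would apply Lemma \ref{key} to extend $l|_{\mathbb{Z}^n}$ to a continuous function $\bar{l}$ on $\mathbb{R}^n$ which is subadditive, absolutely homogeneous, and satisfies $\bar{l}(Ar)=\bar{l}(r)$. Properties (1) and (2) of Lemma \ref{key} make $\bar{l}$ a seminorm on $\mathbb{R}^n$, hence bounded above by a constant multiple of any reference norm: there is $C>0$ with $\bar{l}(v)\le C\|v\|$ for all $v$.

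Since $A$ is diagonalizable over $\mathbb{C}$, I would decompose $\mathbb{R}^n=V_1\oplus\cdots\oplus V_k$ as a direct sum of $A$-invariant real subspaces, where each $V_i$ is either a one-dimensional real eigenspace for a real eigenvalue $\lambda_i$, or a two-dimensional real subspace realizing a complex conjugate pair of eigenvalues of common modulus $r_i\ne 1$, on which $A|_{V_i}$ is conjugate to $r_i$ times a planar rotation. The main step is to verify $\bar{l}|_{V_i}\equiv 0$ for every $i$.

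In the real eigenvalue case, for $v\in V_i$ with $Av=\lambda_i v$ the chain $\bar{l}(v)=\bar{l}(Av)=|\lambda_i|\bar{l}(v)$ together with $|\lambda_i|\ne 1$ forces $\bar{l}(v)=0$. In the complex pair case, $\|A^m v\|$ is comparable to $r_i^m\|v\|$, so taking $m\to+\infty$ if $r_i<1$ or $m\to-\infty$ if $r_i>1$ the inequality $\bar{l}(v)=\bar{l}(A^m v)\le C\|A^m v\|\to 0$ again forces $\bar{l}(v)=0$. Subadditivity of $\bar{l}$ then propagates vanishing from each $V_i$ to all of $\mathbb{R}^n$, and in particular to $\mathbb{Z}^n$.

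The only delicate point, and hence the main obstacle, is the complex conjugate case: one must pass through the $\mathbb{C}$-diagonalization to identify the real two-dimensional $A$-invariant pieces and verify that, in a suitable basis, $A|_{V_i}$ acts as $r_i$ times a rotation, so that iteration drives $\|A^{\pm m}v\|$ to zero geometrically and the continuity bound $\bar{l}\le C\|\cdot\|$ can be applied. Everything else---the seminorm extension provided by Lemma \ref{key}, the one-line real eigenvalue argument, and the final subadditivity combination---is routine.
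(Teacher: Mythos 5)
Your proposal is correct and follows essentially the same route as the paper: extend $l$ to a continuous homogeneous subadditive $A$-invariant function on $\mathbb{R}^{n}$ via Lemma \ref{key}, split $\mathbb{R}^{n}$ into $A$-invariant pieces coming from the $\mathbb{C}$-diagonalization (real eigenlines and two-dimensional scaled-rotation blocks), kill the real eigenlines by homogeneity and the rotation blocks by iterating $A^{\pm m}$ and using boundedness of the extension, then propagate by subadditivity. The only cosmetic difference is that you invoke the bound $\bar{l}\leq C\Vert\cdot\Vert$ where the paper invokes boundedness of $l$ on a compact orbit circle, which is the same estimate.
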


\begin{proof}
Consider the real Jordan form 
\begin{equation*}
\begin{bmatrix}
A_{1} &  &  &  \\ 
& A_{2} &  &  \\ 
&  & \ddots &  \\ 
&  &  & A_{r}%
\end{bmatrix}%
\end{equation*}%
of $A,$ where $A_{i}$ is either a real Jordan block or 
\begin{equation*}
a%
\begin{bmatrix}
\cos \phi & -\sin \phi \\ 
\sin \phi & \cos \phi%
\end{bmatrix}%
,
\end{equation*}%
where $a\neq 1$ is a positive real number. Choose the corresponding basis $%
\{v_{1},..,v_{n}\}$ from corresponding $A_{i}$-invariant subspaces for $%
\mathbb{R}^{n}.$ If $Av_{i}=\lambda _{i}v_{i}$ for $|\lambda _{i}|\neq 1,$
then $l(v_{i})=\lambda _{i}l(v_{i})$ implies $l(v_{i})=0.$ Otherwise, 
\begin{equation*}
l(v_{i})=l(A^{k}v_{i})=a^{k}l((\frac{1}{a}A_{i})^{k}v_{i})
\end{equation*}
for any integer $k.$ Since $\frac{1}{a}A_{i}$ is an rotation matrix and $l$
(a continuous function by Lemma \ref{key}) is bounded on compact set, we get
that $l(v_{i})=0$ when $a\neq 1.$ This shows that the length function $l$
vanishes on a basis of $\mathbb{R}^{n}$ and thus vanishes on the whole $%
\mathbb{R}^{n}$ by Lemma \ref{key}.
\end{proof}

\begin{lemma}
\label{discrete}Let $l:\mathbb{R}^{n}\rightarrow \mathbb{R}$ be a length
function. If the restriction $l|_{\mathbb{Z}^{n}}$ is discretely purely
positive, then $l$ is purely positive.
\end{lemma}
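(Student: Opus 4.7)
The plan is to argue by contradiction: suppose some nonzero $r\in\mathbb{R}^{n}$ satisfies $l(r)=0$, and derive a nonzero integer vector on which $l$ is arbitrarily small, contradicting the uniform lower bound coming from discreteness of $l|_{\mathbb{Z}^{n}}$. The first observation is that by the homogeneity in Lemma \ref{key}(2), $l(tr)=|t|\,l(r)=0$ for every $t\in\mathbb{R}$, so $l$ vanishes on the whole line $L:=\mathbb{R}r$. Subadditivity then gives $l(v)\le l(v-tr)+l(tr)=l(v-tr)$ and, symmetrically, $l(v-tr)\le l(v)$, so $l$ is constant along the cosets of $L$.

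The second step is to produce nonzero lattice vectors arbitrarily close to $L$. Let $\pi:\mathbb{R}^{n}\to\mathbb{R}^{n}/L\cong\mathbb{R}^{n-1}$ be the quotient map. If $\mathbb{Z}^{n}\cap L\ne\{0\}$, then any nonzero $v$ in this intersection lies in $L$ and has $l(v)=0$, which already contradicts discreteness. Otherwise $\pi$ is injective on $\mathbb{Z}^{n}$, making $\pi(\mathbb{Z}^{n})$ a rank-$n$ finitely generated subgroup inside an $(n-1)$-dimensional real vector space; since a discrete subgroup of $\mathbb{R}^{n-1}$ has rank at most $n-1$, $\pi(\mathbb{Z}^{n})$ cannot be discrete, and therefore contains a sequence of nonzero elements accumulating at $0$. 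Pulling back, one obtains nonzero $v_{k}\in\mathbb{Z}^{n}$ and scalars $t_{k}\in\mathbb{R}$ with $v_{k}-t_{k}r\to 0$ in $\mathbb{R}^{n}$.

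Combining the coset-constancy with the continuity of $l$ from Lemma \ref{key} yields $l(v_{k})=l(v_{k}-t_{k}r)\to l(0)=0$, contradicting the uniform lower bound $c>0$ on $l|_{\mathbb{Z}^{n}\setminus\{0\}}$. I expect the subtlest ingredient to be the non-discreteness step: verifying that a rank-$n$ finitely generated subgroup of $\mathbb{R}^{n-1}$ must accumulate at $0$, which follows from the structure theorem for closed subgroups of $\mathbb{R}^{n-1}$ or, more explicitly, from a Dirichlet/pigeonhole argument on a fundamental parallelepiped. Every other step is a direct consequence of the length function axioms and the continuity already built into the setup.
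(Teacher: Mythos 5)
Your proof is correct, and it takes a genuinely different (and more self-contained) route than the paper. The paper invokes the embedding theorem of Fritz--Gadgil--Khare--Nielsen--Silberman--Tao \cite{Tao} to write $l(x)=\Vert f(x)\Vert$ for an additive map $f$ into a Banach space, so that a nonzero vector with $l=0$ forces $\operatorname{Im}f\cong\mathbb{R}^{m}$ with $m<n$, and then concludes because $\mathbb{Z}^{n}$ cannot act freely and properly discontinuously on $\mathbb{R}^{m}$. You replace the Banach-space step by the elementary observation that if $l(r)=0$ then $l$ vanishes on the line $L=\mathbb{R}r$ and, by two applications of subadditivity, is constant on cosets of $L$, hence descends to $\mathbb{R}^{n}/L\cong\mathbb{R}^{n-1}$; the endgame is the same in substance (a rank-$n$ lattice cannot sit discretely in a Euclidean space of dimension $n-1$, so nonzero lattice vectors accumulate near $L$ and continuity drives $l(v_{k})$ below the discreteness bound $c$). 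Your version buys independence from \cite{Tao} and makes explicit exactly where discreteness (not just pure positivity) of $l|_{\mathbb{Z}^{n}}$ enters, which is consistent with the counterexample in the paper's subsequent remark; the paper's version buys brevity and a structural picture of $l$ as a pulled-back norm. One small point worth flagging in both arguments: the lemma as stated only assumes $l$ is a length function on the group $\mathbb{R}^{n}$, while both you and the paper use continuity and $\mathbb{R}$-homogeneity, which are supplied by Lemma \ref{key} in the only context where the lemma is applied; you cite that explicitly, which is the right thing to do.
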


\begin{proof}
By Tao ect. \cite{Tao}, there is a Banach space $(B,\parallel \parallel )$
and an additive group homomorphism $f:\mathbb{R}^{n}\rightarrow B$ such that 
$l(x)=\parallel f(x)\parallel $ for any $x\in \mathbb{R}^{n}.$ If $l(x)=0$
for some $x\neq 0,$ the kernel $\ker f$ is non-trivial and $\func{Im}f=%
\mathbb{R}^{m},m<n.$ Since $l|_{\mathbb{Z}^{n}}$ is discretely purely
positive, the group $\mathbb{Z}^{n}$ acts freely and properly
discontinuously on $\func{Im}f=\mathbb{R}^{m}.$ This implies $m=n,$ a
contradiction. Therefore, $l$ is purely positive.
\end{proof}

\begin{remark}
Without the condition that $l|_{\mathbb{Z}^{n}}$ is discrete, Lemma \ref%
{discrete} is not true. For example, let $v_{1}=(1,\sqrt{2}),v_{2}=(-\sqrt{2}%
,1)\in \mathbb{R}^{2}.$ For the decomposition $\mathbb{R}^{2}=\mathbb{R}%
v_{1}\bigoplus \mathbb{R}v_{2},$ let $f:\mathbb{R}^{2}\rightarrow \mathbb{R}$
be the projection onto the first component. The length function $l$ defined
by $l(x)=|f(x)|$ is purely positive on $\mathbb{Z}^{2},$ since the line $%
\mathbb{R}v_{1}$ has irrational slope.
\end{remark}

\begin{lemma}
\label{finite}Let $A\in \mathrm{GL}_{n}(\mathbb{Z}).$ If the group $G=%
\mathbb{Z}^{n}\rtimes _{A}\mathbb{Z}$ admits a discrete purely positive
length function function $l,$ then $G$ is virtually abelian.
\end{lemma}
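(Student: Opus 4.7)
The plan is to upgrade the length function on $G$ to a genuine norm on $\mathbb{R}^n$ under which $A$ acts by isometries, then use integrality to deduce that $A$ has finite order.

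First I would restrict the discretely purely positive length function $l$ from $G$ down to $\mathbb{Z}^n$, and then invoke Lemma \ref{key} to continuously extend it to a subadditive, positively homogeneous, $A$-invariant function $l:\mathbb{R}^n\to\mathbb{R}_{\geq 0}$. At this stage $l$ is a candidate seminorm; Lemma \ref{discrete} converts the assumed discreteness and pure positivity on $\mathbb{Z}^n$ into pure positivity on all of $\mathbb{R}^n$. Combined with the three axioms from Lemma \ref{key}, this exhibits $l$ as a genuine norm on $\mathbb{R}^n$.

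Next I would exploit the conjugation invariance $l(Ar)=l(r)$, which says that $A\in\mathrm{GL}_n(\mathbb{R})$ is a linear isometry of the normed space $(\mathbb{R}^n,l)$. Since all norms on a finite-dimensional real vector space are equivalent to the Euclidean norm, the family $\{A^m\}_{m\in\mathbb{Z}}$ is uniformly bounded as operators on $\mathbb{R}^n$. A standard linear-algebra argument then shows that all eigenvalues of $A$ have absolute value exactly $1$ and that $A$ is $\mathbb{C}$-diagonalizable, because any nontrivial Jordan block would force $\|A^m\|$ to grow polynomially.

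To finish, I would note that $A\in\mathrm{GL}_n(\mathbb{Z})$ has a monic integer characteristic polynomial whose roots all lie on the unit circle, so Kronecker's theorem forces every eigenvalue to be a root of unity; hence $A^k=I$ for some $k\geq 1$. The subgroup $\mathbb{Z}^n\rtimes_{A^k} k\mathbb{Z}\cong\mathbb{Z}^{n+1}$ then has index $k$ in $G$ and is abelian, proving that $G$ is virtually abelian. The one step that deserves care is the passage from isometry to bounded orbits, and this is where finite-dimensionality (equivalence of norms) is essential; the rest of the argument is a clean chain through Lemmas \ref{key} and \ref{discrete} followed by Kronecker.
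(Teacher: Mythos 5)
Your proposal is correct. The first half coincides with the paper's proof: restrict $l$ to $\mathbb{Z}^{n}$, extend it to $\mathbb{R}^{n}$ by Lemma \ref{key}, and use Lemma \ref{discrete} to promote discrete pure positivity on $\mathbb{Z}^{n}$ to pure positivity on $\mathbb{R}^{n}$, so that $l$ becomes an honest norm preserved by $A$. Where you diverge is the endgame. The paper works through the real Jordan form of $A$ by hand: it kills eigenvalues of modulus $\neq 1$ via $l(v)=|\lambda|l(v)$, kills nontrivial Jordan blocks via the linear-growth inequality $ml(e_{1})\leq 2l(e_{2})$, and kills scaled rotation blocks via boundedness of $l$ on compact sets, concluding that $A$ is a block sum of rotations and hence of finite order. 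You instead observe that $A$ is an isometry of the finite-dimensional normed space $(\mathbb{R}^{n},l)$, so by equivalence of norms the family $\{A^{m}\}_{m\in\mathbb{Z}}$ is uniformly bounded; this forces all eigenvalues onto the unit circle and rules out nontrivial Jordan blocks in one stroke, after which Kronecker's theorem (applicable because the minimal polynomial of each eigenvalue divides the integer characteristic polynomial, so all conjugates are also eigenvalues of modulus $1$) gives $A^{k}=I$. Your route is cleaner and avoids the case analysis, at the cost of invoking Kronecker; note that you could skip even that, since a uniformly bounded set of integer matrices is finite, so $\{A^{m}\}$ being bounded already forces $A$ to have finite order. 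Both arguments land on the same index-$k$ abelian subgroup $\mathbb{Z}^{n}\times k\mathbb{Z}$.
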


\begin{proof}
By Lemma \ref{key}, we have a length function $l:\mathbb{R}^{n}\mathbb{%
\rightarrow R}_{\geq 0}$ extending $\mathbb{Z}^{n}\mathbb{\rightarrow R}%
_{\geq 0}.$ Suppose that $l|_{\mathbb{Z}^{n}}$ is discretely purely
positive. Lemma \ref{discrete} implies that $l|_{\mathbb{R}^{n}}$ is purely
positive. Consider the real Jordan form 
\begin{equation*}
\begin{bmatrix}
A_{1} &  &  &  \\ 
& A_{2} &  &  \\ 
&  & \ddots &  \\ 
&  &  & A_{r}%
\end{bmatrix}%
\end{equation*}%
of $A,$ where $A_{i}$ is either a real Jordan block or 
\begin{equation*}
a%
\begin{bmatrix}
\cos \phi & -\sin \phi \\ 
\sin \phi & \cos \phi%
\end{bmatrix}%
,
\end{equation*}%
for some real number $a>0.$ If $Av=\lambda v$ for a unit eigenvector $v$ and
an eigenvalue $\lambda $ with $|\lambda |\neq 1,$ then $l(v)=l(Av)=\lambda
l(v)$ implying $l(v)=0.$ Therefore, all the eigenvalues have norm 1. Suppose
that there is a non-trivial Jordan block 
\begin{equation*}
J=%
\begin{bmatrix}
1 & 1 & 0 &  \\ 
& 1 & \ddots & 0 \\ 
&  & \ddots & 1 \\ 
&  &  & 1%
\end{bmatrix}%
_{k\times k}
\end{equation*}%
with $k\geq 2.$ Let $\{e_{1},...,e_{k}\}$ be the basis of the eigenspace $%
\mathbb{R}^{k}$ under which the representation matrix of the restriction of $%
A$ is of the form $J.$ For any positive integer $m,$ we have 
\begin{equation*}
J^{m}e_{2}=me_{1}+e_{2}
\end{equation*}%
and 
\begin{equation*}
ml(e_{1})\leq l(J^{m}e_{2})+l(e_{2})=2l(e_{2}).
\end{equation*}%
Since $m$ is arbitrary, we get $l(e_{1})=0.$ Therefore, all the real Jordan
blocks are $A$ are diagonal. Suppose that $a\neq 1.$ For any $v_{i}$ in the
subspace corresponding to $A_{i},$ we have 
\begin{equation*}
l(v_{i})=l(A^{k}v_{i})=a^{k}l((\frac{1}{a}A_{i})^{k}v_{i})
\end{equation*}%
for any integer $k.$ Since $\frac{1}{a}A_{i}$ is a rotation matrix and $l$
is bounded on compact set, we get that $l(v_{i})=0.$ Therefore, $a=1$ and $A$
is conjugate to a block sum of rotation matrices and $1s$. Since $A$ is an
integer matrix, all the rotation matrices are of finite orders. This proves
that $A$ is of finite order and $G$ is virtually abelian.
\end{proof}

On the other hand, Conner \cite{conner} (Example 7.1) gave an example $%
\mathbb{Z}^{n}\rtimes _{A}\mathbb{Z}$ with purely positive word length,
where 
\begin{equation*}
A=%
\begin{bmatrix}
0 & 0 & 0 & -1 \\ 
1 & 0 & 0 & 2 \\ 
0 & 1 & 0 & -1 \\ 
0 & 0 & 1 & 2%
\end{bmatrix}%
\end{equation*}%
is an irreductible matrix with an eigenvalue of norm 1. Recall that an
integral matrix $A_{n\times n}$ is irreducible if it has no proper
non-trivial invariant subgroup in $\mathbb{Z}^{n}.$ Using the same idea as
Conner \cite{conner}, we can prove the following result.

\begin{lemma}
\label{stableword}Let $A\in \mathrm{GL}_{n}(\mathbb{Z})$ be an irreducible
matrix with a norm-one eigenvalue. Then the stable word length on the
semi-direct product $\mathbb{Z}^{n}\rtimes _{A}\mathbb{Z}$ is purely
positive.
\end{lemma}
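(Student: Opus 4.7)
The plan is to bound the stable word length from below via a \emph{spectral} length function built from a norm-one eigenvalue of $A$, and then invoke irreducibility to ensure this bound is nonzero on every nonzero integer vector. Recall that the stable word length on $G = \mathbb{Z}^n \rtimes_A \mathbb{Z}$ with respect to the generators $\{e_1,\ldots,e_n,t\}$ is $\|g\|_s := \lim_n |g^n|/n$, and since $G$ is a torsion-free extension of $\mathbb{Z}$ by $\mathbb{Z}^n$ the task is to show $\|g\|_s>0$ for every non-identity $g$. The case $g=(x,k)$ with $k\neq 0$ is immediate: the homomorphism $G\to\mathbb{Z}$ sending $t\mapsto 1$ and $e_i\mapsto 0$ is $1$-Lipschitz on generators, so $\|g\|_s\geq|k|$. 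The real content is therefore the case $g=(x,0)$ with $x\in\mathbb{Z}^n\setminus\{0\}$.

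Fix an eigenvalue $\lambda\in\mathbb{C}$ of $A$ with $|\lambda|=1$ and let $V\subset\mathbb{R}^n$ be the associated real invariant subspace (one-dimensional if $\lambda=\pm 1$, two-dimensional if $\lambda\notin\mathbb{R}$). Let $\pi:\mathbb{R}^n\to V$ be the projection along the sum of the remaining generalized real eigenspaces; because the characteristic polynomial of $A$ is irreducible, every eigenvalue is simple and $A|_V$ is conjugate to a Euclidean rotation, so $|\pi(A^m v)|=|\pi(v)|$ for all $m\in\mathbb{Z}$ and $v\in\mathbb{R}^n$ with respect to a suitable inner product on $V$. I would first show that $\pi$ is injective on $\mathbb{Q}^n$: the kernel $\ker\pi$ is $A$-invariant, so $\ker\pi\cap\mathbb{Q}^n$ is an $A$-invariant $\mathbb{Q}$-subspace, which by the irreducibility of $A$ is either $0$ or all of $\mathbb{Q}^n$, and the latter is impossible since $\pi\neq 0$. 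In particular, $|\pi(x)|>0$.

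Next, suppose $s_1 s_2\cdots s_L$ is any word in the chosen generators representing $(nx,0)$. Unfolding the semi-direct product multiplication shows that its $\mathbb{Z}^n$-component equals $\sum_{j\,:\,s_j = e_{i_j}^{\pm 1}} \pm A^{m_j} e_{i_j}$, where $m_j$ is the signed sum of $t$-exponents appearing strictly to the left of position $j$. Applying $\pi$, using $|\pi(A^{m_j} e_{i_j})| = |\pi(e_{i_j})|\leq C:=\max_i|\pi(e_i)|$, and the triangle inequality on $V$ gives $n|\pi(x)| = |\pi(nx)| \leq L\cdot C$, hence $|(nx,0)|\geq n|\pi(x)|/C$ and $\|(x,0)\|_s\geq|\pi(x)|/C>0$. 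The main obstacle is to assemble the two ingredients simultaneously: the norm-one condition $|\lambda|=1$ is exactly what makes $|\pi|$ invariant under $A$, so the triangle inequality yields a lower bound independent of the $t$-conjugations, while $\mathbb{Z}$-irreducibility of $A$ is exactly what prevents $\pi$ from vanishing on any nonzero integer vector. This is essentially Conner's device in \cite{conner}, adapted to the general irreducible setting.
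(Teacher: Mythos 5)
Your proof is correct and follows essentially the same route as the paper's (and Conner's): project onto the invariant subspace of the norm-one eigenvalue, where $A$ acts isometrically so the generator contributions cannot cancel under $t$-conjugation, and use irreducibility to see the projection is nonzero on every nonzero integer vector. If anything, your version is slightly more careful in treating a general eigenvalue with $|\lambda|=1$ as a rotation, where the paper's proof writes $Av=v$.
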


\begin{proof}
Let $\mathbb{C}^{n}>\mathbb{Z}^{n}$ be a normed vector space, given by 
\begin{equation*}
\Vert a_{1}e_{1}+a_{2}e_{2}+\cdots +a_{n}e_{n}\Vert =a_{1}\bar{a}_{1}+a_{2}%
\bar{a}_{2}+\cdots +a_{n}\bar{a}_{n},
\end{equation*}%
where each $a_{i}\in \mathbb{C}$ and $\{e_{1},e_{2},...,e_{n}\}$ is the
standard basis. For each 
\begin{equation*}
g=(z,t^{i})\in \mathbb{C}^{n}\rtimes _{A}\mathbb{Z},
\end{equation*}%
where $t$ is a generator of $\mathbb{Z}$, let 
\begin{equation*}
L(g)=\inf \{\sum_{j=1}^{k}(\Vert z_{j}\Vert
+|i_{j}|):g=(z_{1},t^{0})(0,t^{i_{1}})(z_{2},t^{0})(0,t^{i_{2}})\cdots
(z_{k},t^{0})(0,t^{i_{k}})\}
\end{equation*}%
for some $z_{1},z_{2},...,z_{k}\in \mathbb{C}^{n}$ and integers $%
i_{1},i_{2},...,i_{k}\}.$ From the definition, we have 
\begin{equation*}
L(gh)\leq L(g)+L(h)
\end{equation*}%
and $L(g)=L(g^{-1})$ for any $g,h\in \mathbb{C}^{n}\rtimes _{A}\mathbb{Z}$.
Define $l(g)=\lim_{n\rightarrow \infty }\frac{L(g^{n})}{n}.$ Note that $l(g)$
is the stable word length when $g\in \mathbb{Z}^{n}\rtimes _{A}\mathbb{Z}.$
Since $A$ is irreducible, $A$ is diagonalizable over $\mathbb{C}.$ Let $v$
be a unit eigenvector such that $Av=v.$ Since $A$ is diagonalizable over $%
\mathbb{C}$, the complement $(\mathbb{C}v)^{\perp }$ is $A$-invariant as
well. We have an inclusion $\mathbb{C}v\times \mathbb{Z\rightarrow C}%
^{n}\rtimes _{A}\mathbb{Z}$. For each $g=(z,t^{i})\in \mathbb{C}^{n}\rtimes
_{A}\mathbb{Z}$, write $z=z_{v}+z_{v}^{\prime }$ with $z_{v}\in \mathbb{C}v$
and $z_{v}^{\prime }\in (\mathbb{C}v)^{\perp }.$ For arbitrary $z\in \mathbb{%
C}v$ with $(z,1)=(z_{1},t^{i_{1}})(z_{2},t^{i_{2}})\cdots (z_{k},t^{i_{k}}),$
we have 
\begin{eqnarray*}
z &=&(z,1)=(z_{1},t^{i_{1}})(z_{2},t^{i_{2}})\cdots (z_{k},t^{i_{k}}) \\
&=&(z_{1v},t^{i_{1}})(z_{2v},t^{i_{2}})\cdots
(z_{kv},t^{i_{k}})=z_{1v}+z_{2v}+\cdots +z_{kv}
\end{eqnarray*}%
with the decomposition of each $z_{i}=z_{iv}+z_{iv}^{\prime }.$ This means
that $L(z)=\Vert z\Vert .$

Let%
\begin{equation*}
I=\{g\in \mathbb{Z}^{n}\rtimes _{A}\mathbb{Z}:l(g)=0\}.
\end{equation*}
For any $g=(z,t^{i}),$ we have $L(g)\geq |i|.$ This implies that $I<\mathbb{Z%
}^{n}.$ Since $l$ is conjugate invariant and homogenuous, $I$ is an $A$%
-invariant direct summand of $\mathbb{Z}^{n}$. Since $A$ is irreducible, we
have either $I=\mathbb{Z}^{n}$ or $I=0.$ But if $I=\mathbb{Z}^{n},$ we will
have that $l$ vanishes on $\mathbb{C}^{n}$ by noting that $\{x\in \mathbb{C}%
^{n}\rtimes _{A}\mathbb{Z}:l(g)=0\}$ is a $\mathbb{C}$-vector subspace of $%
\mathbb{C}^{n}.$ This implies that $I=0$ and $l$ is purely positive.
\end{proof}

Combining Lemma \ref{norm1} and Lemma \ref{stableword}, we have the
following.

\begin{corollary}
Let $A\in \mathrm{GL}_{n}(\mathbb{Z})$ be an irreducible matrix. The
semi-direct product $\mathbb{Z}^{n}\rtimes _{A}\mathbb{Z}$ is purely
positive if and only if there is a norm-one eigenvalue of $A.$
\end{corollary}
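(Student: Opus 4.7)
The plan is to prove the equivalence by addressing each direction separately, since each follows almost directly from one of the two lemmas immediately preceding the statement.

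For the forward implication (existence of a norm-one eigenvalue implies pure positivity), I would simply invoke Lemma \ref{stableword}: when $A$ is irreducible with a norm-one eigenvalue, that lemma already produces a purely positive length function on $\mathbb{Z}^{n}\rtimes_{A}\mathbb{Z}$, namely the stable word length. So this direction is essentially a restatement of Lemma \ref{stableword}.

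For the converse, I would argue by contrapositive: assume every eigenvalue of $A$ has norm different from $1$, and show that no length function on $G=\mathbb{Z}^{n}\rtimes_{A}\mathbb{Z}$ is purely positive. To invoke Lemma \ref{norm1}, I first need $A$ to be diagonalizable over $\mathbb{C}$. I would derive this from the irreducibility hypothesis as follows. If the characteristic polynomial $p(x)\in\mathbb{Z}[x]$ of $A$ factored nontrivially, Gauss's lemma (applicable since $p$ is monic) would produce a monic integer factorization $p=p_{1}p_{2}$, and then $\ker p_{1}(A)\cap\mathbb{Z}^{n}$ would be a proper nontrivial $A$-invariant sublattice, contradicting the irreducibility of $A$. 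Hence $p$ is irreducible over $\mathbb{Q}$, and because $\mathrm{char}\,\mathbb{Q}=0$ an irreducible polynomial is automatically separable, so $A$ has $n$ distinct eigenvalues and is $\mathbb{C}$-diagonalizable. Lemma \ref{norm1} then forces any length function $l$ on $G$ to vanish on all of $\mathbb{Z}^{n}$; since $\mathbb{Z}^{n}$ is nonzero and torsion-free, no such $l$ can be purely positive. As $l$ was arbitrary, $G$ is not purely positive.

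The only mildly subtle step is the brief algebraic argument connecting the combinatorial notion of irreducibility (no proper invariant sublattice in $\mathbb{Z}^{n}$) with $\mathbb{C}$-diagonalizability; everything else is a direct invocation of Lemma \ref{stableword} in one direction and Lemma \ref{norm1} in the other. Since the paper has already used this same connection freely in the proof of Lemma \ref{stableword}, I expect no real difficulty here.
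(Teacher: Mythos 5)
Your proposal is correct and follows exactly the paper's route: the paper derives the corollary by combining Lemma \ref{stableword} (for the direction with a norm-one eigenvalue) with Lemma \ref{norm1} (for the contrapositive of the other direction). Your extra step justifying that irreducibility forces $\mathbb{C}$-diagonalizability is a detail the paper uses implicitly (it is asserted without proof inside the proof of Lemma \ref{stableword}), so you are simply making the same argument more explicit.
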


\section{Proofs of theorems}

\begin{lemma}
\label{nil}(\cite{ye}, Corollary 4.7) A purely positive finitely generated
nilpotent group is virtually abelian.
\end{lemma}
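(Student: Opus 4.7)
The plan is to reduce to the torsion-free case and then, by producing a central element of length zero in the last term of the lower central series, contradict pure positivity unless $G$ is already abelian. First I would pass to a torsion-free subgroup of finite index, which exists for any finitely generated nilpotent group; the restriction of $l$ to this subgroup remains a purely positive length function, so it suffices to prove the following: a torsion-free finitely generated nilpotent group $G$ admitting a purely positive length function must be abelian.

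Suppose $G$ is such a group of nilpotency class $c\geq 2$ and consider its lower central series $G=G_{1}\supseteq G_{2}\supseteq\cdots\supseteq G_{c}\supseteq G_{c+1}=\{1\}$. The last nontrivial term $G_{c}$ is central in $G$, is torsion-free (as an isolated subgroup of a torsion-free nilpotent group), and is generated as an abelian group by commutators $[a,b]$ with $a\in G$ and $b\in G_{c-1}$; each such commutator lies in $G_{c}$ and is therefore central.

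The main computation is the following claim: whenever $[a,b]=z$ is central in $G$, one has $l(z)=0$. Because $z$ commutes with $a$ and $b$, the standard identity $[a^{n},b]=z^{n}$ holds for every integer $n$, which rewrites as $a^{n}ba^{-n}=z^{n}b$. Conjugation invariance gives $l(z^{n}b)=l(b)$, and since $z^{n}$ and $b^{-1}$ commute (as $z$ is central), subadditivity on commuting elements yields $l(z^{n})=l((z^{n}b)b^{-1})\leq l(z^{n}b)+l(b^{-1})=2l(b)$. Combined with the homogeneity $l(z^{n})=n\,l(z)$ and letting $n\to\infty$, this forces $l(z)=0$.

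Applying the claim to each generator $[a,b]$ of $G_{c}$ and then propagating across products via subadditivity on the abelian group $G_{c}$ gives $l\equiv 0$ on $G_{c}$. Since $G_{c}$ is nontrivial and torsion-free, this contradicts pure positivity; hence $c=1$, the torsion-free finite-index subgroup is abelian, and the original group is virtually abelian. The only delicate step is the main computation in the third paragraph: identifying the right commuting pairs to invoke axiom (3) and then exploiting homogeneity axiom (1) to drive $l(z)$ to zero. The rest reduces to standard structure theory of finitely generated nilpotent groups.
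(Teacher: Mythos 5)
Your proof is correct. The paper does not actually prove this lemma---it cites it from \cite{ye} (Corollary 4.7)---but your argument is the standard one and is exactly the mechanism the paper itself invokes elsewhere (e.g.\ in Lemma \ref{6.2} and the proof of Theorem \ref{th1}, where it is used that a length function must vanish on the central commutator $[b_1,b_2]$ of a Heisenberg subgroup): the identity $a^nba^{-n}=z^nb$ together with conjugation invariance, subadditivity on commuting elements, and homogeneity forces $n\,l(z)\le 2l(b)$, hence $l(z)=0$ on the last nontrivial term of the lower central series of a torsion-free finite-index subgroup, contradicting pure positivity unless that subgroup is abelian.
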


\begin{remark}
Without the condition of finite generation, Lemma \ref{nil} is not true. For
each integer $n\geq 2,$ let $H_{n}$ the group consisting of all $3\times 3$
strictly upper triangular matrices with entries in $\mathbb{Z}/n.$ Let $%
G=(\bigoplus_{n\geq 2}H_{n})\times \mathbb{Z}$. The group $G$ is nilpotent,
with a purely positive length function coming from the projection $%
G\rightarrow \mathbb{Z}$. However, $G$ is not virtually abelian.
\end{remark}

\begin{lemma}
\label{6.2}Let $G$ be a solvable group equipped with a purely positive
lenghth function $l.$ Let $A$ be a maximal abelian normal subgroup of $G$.
Assume that $A$ is torsion-free. Then $C_{G}(A)=A$, where $C_{G}(A)$ is the
centralizer of $A$ in $G.$
\end{lemma}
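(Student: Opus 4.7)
The plan is a proof by contradiction: assume $C := C_G(A) \supsetneq A$ and produce a nontrivial element of $A$ with vanishing length. First I would verify the standard fact that $C$ is normal in $G$, using that a conjugate of an element centralizing the normal subgroup $A$ still centralizes $A$. Since $G$ is solvable, $C/A$ is a nontrivial solvable group, so the last nontrivial term of its derived series is a nontrivial abelian characteristic subgroup of $C/A$; because $C/A \triangleleft G/A$, this subgroup is normal in $G/A$ and pulls back to a subgroup $D$ with $A \subsetneq D \subseteq C$, $D \triangleleft G$, and $D/A$ abelian.

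Next I would show that $D$ is genuinely $2$-step nilpotent with $Z(D)=A$. On the one hand $A \subseteq Z(D)$ because $D \subseteq C_G(A)$; on the other, $Z(D)$ is characteristic in $D$, hence abelian normal in $G$, so maximality of $A$ forces $Z(D)=A$. If $D$ were abelian, then $D \subseteq Z(D) = A$, contradicting $D \supsetneq A$. Therefore one can pick $x,y \in D$ whose commutator $z := [x,y]$ is nontrivial; this $z$ lies in $[D,D] \subseteq A$ (as $D/A$ is abelian) and is central in $D$, so in particular it is a nontrivial element of the torsion-free group $A$, and pure positivity of $l$ gives $l(z) > 0$.

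Finally I would apply the length-function axioms to force $l(z)=0$ and reach a contradiction. Centrality of $z$ yields $xyx^{-1}=zy$ and, by induction on $k$, $x^k y x^{-k} = z^k y$ for every $k \geq 1$. Since $z^k$ is central, $z^k y$ commutes with $y^{-1}$, and the axioms give
\[
k\,l(z) = l(z^k) = l\bigl((x^k y x^{-k})\,y^{-1}\bigr) \leq l(x^k y x^{-k}) + l(y^{-1}) = 2\,l(y),
\]
using conjugation invariance and $l(y^{-1})=l(y)$. Letting $k \to \infty$ forces $l(z)=0$, contradicting $l(z)>0$. The only step that really takes thought is carving out the $2$-step nilpotent normal subgroup $D$; once $D$ is in hand, the length-function estimate is the familiar principle that nontrivial central commutators cannot coexist with a purely positive length function on a torsion-free centre.
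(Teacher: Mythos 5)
Your proof is correct and follows essentially the same route as the paper's: both pass to the pullback of a nontrivial abelian characteristic subgroup of $C_G(A)/A$, extract a nontrivial central commutator lying in the torsion-free group $A$, and contradict pure positivity. The only cosmetic difference is that you carry out the estimate $k\,l(z)=l\bigl((x^k y x^{-k})y^{-1}\bigr)\leq 2l(y)$ explicitly, whereas the paper delegates this to Lemma \ref{nil} applied to the Heisenberg-type subgroup $\langle b_1,b_2\rangle$.
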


\begin{proof}
Note that $C_{G}(A)$ is a normal subgroup of $G$. Actually, for any $x\in
C_{G}(A),t\in G,a\in A,$ we have $%
txt^{-1}a=txt^{-1}t(t^{-1}at)t^{-1}=tx(t^{-1}at)t^{-1}=t(t^{-1}at)xt^{-1}=atxt^{-1}. 
$ Suppose that $C_{G}(A)\neq A.$ Let $B/A$ be a maximal abelian
characteristic subgroup of the solvable group $C_{G}(A)/A$. Note that $B/A$
is non-trivial, as it can contain the last non-trivial term in the derived
series of $C_{G}(A)/A.$ Since $C_{G}(A)/A$ is normal in $G/A,$ the group $%
B/A $ is normal in $G/A.$ Let $B<G$ be the normal subgroup corresponding to $%
B/A. $ Since the commutator subgroup $1\neq \lbrack B,B]<A,$ there exist
elements $b_{1},b_{2}\in B$ such that the commutator $1\neq \lbrack
b_{1},b_{2}]\in A. $ But the subgroup $\langle b_{1},b_{2}\rangle <B$ is a
Heisenberg group. Lemma \ref{nil} implies that any length function of $%
\langle b_{1},b_{2}\rangle $ vanishes on the center $[b_{1},b_{2}]$, which
is a contradiction.
\end{proof}

\begin{theorem}
\label{th0}Let $G$ be a solvable group with a purely positive length
function $l.$ Suppose that $G$ has a finite virtually cohomological
dimension. There exists a finite-index subgroup $K<G$ such that

1) either $K$ is abelian, or

2) there exits abelian groups $A,B$ fitting into an exact sequence%
\begin{equation*}
1\rightarrow A\rightarrow K\rightarrow B\rightarrow 1
\end{equation*}%
satisfying that the action of $B$ on $A\bigotimes_{\mathbb{Z}}\mathbb{Q}$ is
effective and each non-identity element of $B$ is non-unipotent in $\mathrm{%
Aut}(A\bigotimes_{\mathbb{Z}}\mathbb{Q)\hookrightarrow }\mathrm{GL}_{n}(%
\mathbb{C}).$ In particular, $G$ is either virtually abelian or virtually
non-nilpotent abelian-by-abelian.
\end{theorem}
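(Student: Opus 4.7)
The plan is to identify a suitable maximal abelian normal subgroup $A$ of (a finite-index subgroup of) $G$, control the induced conjugation action on $A \otimes_{\mathbb{Z}} \mathbb{Q}$ by running a Jordan-block commutator computation against the length function, and then conclude using Mal'cev's structure theorem for solvable linear groups. First, since $G$ has finite virtual cohomological dimension, pass to a torsion-free finite-index subgroup $G_0 < G$; the length function $l$ restricts to a purely positive length function on $G_0$. Let $A$ be a maximal abelian normal subgroup of $G_0$; it is torsion-free and, by finite Hirsch length, of finite $\mathbb{Q}$-rank $n$. Lemma \ref{6.2} yields $C_{G_0}(A) = A$, so conjugation embeds $G_0/A \hookrightarrow \mathrm{Aut}(A) \hookrightarrow \mathrm{GL}(A \otimes \mathbb{Q}) = \mathrm{GL}_n(\mathbb{Q})$; the second injection uses that $A$ is torsion-free of finite rank. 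If $G_0/A = 1$, then $K := G_0$ is abelian, giving case (1). Otherwise, apply Mal'cev's theorem to replace $G_0/A$ by a finite-index subgroup triangularizable over $\mathbb{C}$, and let $K < G_0$ be its preimage and $B := K/A$.

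The heart of the argument is the claim that every non-identity element of $B$ is non-unipotent. Suppose for contradiction that some $g \in K$ projects to a non-trivial unipotent element of $B$, so conjugation by $g$ on $V := A \otimes \mathbb{Q}$ is unipotent and non-zero. Jordan theory supplies $v \in V$ with $(g-1)v = w \neq 0$ and $(g-1)w = 0$. Choosing $N \in \mathbb{Z}_{>0}$ so that $a := Nv$ and $b := Nw$ lie in $A$, and writing $A$ additively, we get $gag^{-1} = a + b$, $gbg^{-1} = b$, and $b \neq 0$. Induction gives $g^m a g^{-m} = a + mb$, so the commutator $[g^m,a] = (g^m a g^{-m}) \cdot a^{-1}$ equals $mb$ in $A$. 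The two factors lie in the abelian group $A$ and hence commute, so conjugation-invariance and subadditivity yield
\[
|m|\, l(b) \;=\; l(mb) \;=\; l([g^m,a]) \;\leq\; l(g^m a g^{-m}) + l(a^{-1}) \;=\; 2\, l(a).
\]
Letting $m \to \infty$ forces $l(b) = 0$, contradicting pure positivity since $b$ is a non-trivial element of the torsion-free group $A$. Hence $B$ has no non-trivial unipotent elements; since every commutator inside a triangular subgroup of $\mathrm{GL}_n(\mathbb{C})$ is unipotent, all commutators in $B$ are trivial, so $B$ is abelian.

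This produces the exact sequence of case (2): $B$ is abelian, acts effectively on $A \otimes \mathbb{Q}$ by construction, and has every non-identity element non-unipotent. For the ``in particular'' assertion, if $K$ were nilpotent of class $c$, then for every $g \in K$ and $a \in A$ the iterated commutator $[g,a,g,\ldots,g]$ (with $c$ copies of $g$) would vanish, translating to $(g-1)^c = 0$ on $A \otimes \mathbb{Q}$; every element of $B$ would then act unipotently, which is now excluded. The main obstacle is the Jordan-block length computation: it requires clearing denominators to realize a rational Jordan pair by genuine elements $a, b \in A$, and then extracting a linear-in-$m$ growth rate for $l([g^m,a])$ that conflicts with the conjugation-invariant upper bound $2\, l(a)$; the rest is a fairly standard deployment of Lemma \ref{6.2}, the injectivity of $\mathrm{Aut}(A) \to \mathrm{GL}(A \otimes \mathbb{Q})$, and Mal'cev's theorem.
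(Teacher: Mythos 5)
Your proposal is correct and follows essentially the same route as the paper: pass to a torsion-free finite-index subgroup, take a maximal abelian normal subgroup $A$, use Lemma \ref{6.2} to embed the quotient into $\mathrm{GL}_n(\mathbb{Q})$, triangularize a finite-index subgroup over $\mathbb{C}$, and kill unipotent elements with the length function. The only (cosmetic) difference is that where the paper cites Lemma \ref{nil} to exclude non-trivial unipotents, you carry out the underlying Heisenberg-type commutator estimate $|m|\,l(b)=l([g^{m},a])\leq 2\,l(a)$ explicitly, which if anything makes that step more transparent.
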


\begin{proof}
Let $H<G$ be a finite-index subgroup of finite cohomological dimension. Let $%
A$ be a maximal abelian normal subgroup of $H.$ Since $H$ is torsion-free, $%
A $ is torsion-free. The quotient group $H/A$ acts on $A$ by conjugation. By
Lemma \ref{6.2}, the action is effective. Note that $A$ embeds into $%
A\bigotimes_{\mathbb{Z}}\mathbb{Q\cong Q}^{n}$ for some integer $n.$ The
group $H/A$ is isomorphic to a subgroup of $\mathrm{GL}_{n}(\mathbb{Q}%
)\hookrightarrow \mathrm{GL}_{n}(\mathbb{C}),$ by considering its conjugate
action on $A.$ Therefore, $H/A$ has a finite-index subgroup $K/A$ (for some $%
K<G)$, which is conjugate to a subgroup of the upper triangular matrices in $%
\mathrm{GL}_{n}(\mathbb{C}).$ For each unipotent $g\in K/A$ (eg. $g\in
\lbrack K/A,K/A],$ the commutator subgroup), the subgroup $\langle
A,g\rangle $ is isomorphic to the nilpotent group $A\rtimes _{g}\mathbb{Z}.$
For any finite subset $S\subset A,$ the subgroup $\langle S,g\rangle $
generated by $S,g$ is still nilpotent. Lemma \ref{nil} implies that $\langle
S,g\rangle $ is virtually abelian. Therefore, $g=1.$ This proves that $K/A$
(isomorphic to a subgroup of $(\mathbb{C}^{\ast })^{n})$ is abelian. If $K/A$
is finite, then $H$ is virtually abelian. If $K/A$ is infinite, and $H$ is
virtually abelian-by-abelian. In the latter case, take $B=K/A$ to finish the
proof.
\end{proof}

\bigskip

\begin{proof}[Proof of Theorem \protect\ref{th1}]
Let $K$ be a finite-index torsion-free subgroup of $G.$ Let $A$ be a maximal
normal abelian group of $K.$ Since $K/A$ acts effectively on $A,$ we know
that $K/A$ is isomorphic to a subgroup of $\mathrm{GL}_{n}(\mathbb{Q}%
)\hookrightarrow \mathrm{GL}_{n}(\mathbb{C}).$ There is a finite-index
subgroup $K_{1}/A$ of $K/A,$ which is upper triangulizable. Any unipotent
element $\gamma \in K_{1}/A$ (eg. $\gamma \in \lbrack K_{1}/A,K_{1}/A]$ the
commutator subgroup) will give a nilpotent subgroup $\langle A,\gamma
\rangle =A\rtimes _{\gamma }\mathbb{Z}$, which is impossible since any
length function will vanish on the center of the nilpotent group (see \cite%
{ye}, Lemma 5.2). Therefore, $K_{1}/A$ is abelian and the finite-index
subgroup $K_{1}<G$ is metabelian. A classical theorem (cf. \cite{lr},
11.3.3, page 252) implies that $K_{1}$ embeds into $H\rtimes (K_{1})_{%
\mathrm{ab}}$ for some finitely generated $\mathbb{Z}[(K_{1})_{\mathrm{ab}}]$%
-module $H$ over the integral group ring of the abelianization of $K_{1}.$
Since $K_{1}$ is finitely generated, the abelianization $(K_{1})_{\mathrm{ab}%
}$ contains a subgroup $\mathbb{Z}^{n}$ of finite index. Note that $H$ is
finitely generated $\mathbb{Z}[\mathbb{Z}^{n}]$-module as well. The group $%
K_{1}$ is a finitely generated torsion-free metabelian. Therefore, it is $%
\mathbb{C}$-linear by Levi\v{c} \cite{le} and Remeslennikov \cite{Re}.
\end{proof}

\begin{theorem}
\label{th2}Let $G$ be a solvable group with a purely positive length
function. Suppose that $G$ is virtually torsion-free and any abelian
subgroup $A$ is finitely generated. Then $G$ is virtually either $\mathbb{Z}%
^{n}$ or a non-nilpotent group $B$ of the form%
\begin{equation*}
1\rightarrow \mathbb{Z}^{m}\rightarrow B\rightarrow \mathbb{Z}%
^{k}\rightarrow 1
\end{equation*}%
for some integers $n,m,k.$ Here $\mathbb{Z}^{k}$ acts on $\mathbb{Z}^{m}$
effectively by non-unipotent matrices.
\end{theorem}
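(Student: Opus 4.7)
The plan is to run the proof of Theorem \ref{th0} under the stronger hypothesis, upgrading the abelian-by-abelian conclusion to a genuine $\mathbb{Z}^{m}$-by-$\mathbb{Z}^{k}$ extension. I would first pass to a finite-index torsion-free subgroup $H<G$ and let $A\triangleleft H$ be a maximal normal abelian subgroup. The hypothesis forces $A$ to be finitely generated, and torsion-freeness of $H$ gives $A\cong\mathbb{Z}^{m}$. Lemma \ref{6.2} then identifies the conjugation representation
\[
H/A\hookrightarrow\mathrm{Aut}(A)=\mathrm{GL}_{m}(\mathbb{Z})
\]
as an injection.

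Next I would apply the Mal'cev--Kolchin theorem to the solvable linear group $H/A$: after passing to a finite-index subgroup $K/A\leq H/A$, the image lies in the upper triangular matrices of $\mathrm{GL}_{m}(\mathbb{C})$. Any nontrivial unipotent element $\bar{\gamma}\in K/A$ has infinite order (since unipotents in characteristic zero are torsion-free), so $\langle A,\gamma\rangle\cong A\rtimes_{\bar{\gamma}}\mathbb{Z}$ is a finitely generated nilpotent group carrying the restricted purely positive length function. Lemma \ref{nil} then makes it virtually abelian, so some positive power of $\bar{\gamma}$ acts trivially on $A$; injectivity of $H/A\hookrightarrow\mathrm{GL}_{m}(\mathbb{Z})$ then forces $\bar{\gamma}$ to have finite order, a contradiction. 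Since the commutator subgroup of an upper triangular group lies in its unipotent radical, the same observation gives $[K/A,K/A]=1$, i.e., $K/A$ is abelian.

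I would then invoke Mal'cev's theorem that every solvable subgroup of $\mathrm{GL}_{n}(\mathbb{Z})$ is polycyclic to conclude that $K/A$ is finitely generated. Passing to a further finite-index subgroup $K'\leq K$ with $K'/A\cong\mathbb{Z}^{k}$ yields the desired extension $1\rightarrow\mathbb{Z}^{m}\rightarrow K'\rightarrow\mathbb{Z}^{k}\rightarrow 1$. The action is effective by Lemma \ref{6.2}, and its matrices are non-unipotent by the previous step. If $k=0$ then $G$ is virtually $\mathbb{Z}^{m}$. If $k\geq 1$, non-nilpotency of $K'$ follows from the standard observation that in a nilpotent group of class $c$ the iterated commutators $[g,[g,\ldots,[g,a]]]=(T_{g}-I)^{c}a$ vanish on any normal abelian subgroup, so the conjugation action of $K'$ on $A$ would be unipotent---contradicting the non-unipotency of nontrivial elements of $K'/A$ already established.

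The main obstacle I foresee is the finite generation of $K/A$: because $K/A$ is a quotient rather than a subgroup of $G$, the hypothesis that abelian subgroups of $G$ are finitely generated does not apply directly. Invoking Mal'cev's theorem on solvable $\mathbb{Z}$-linear groups is the cleanest route; a self-contained alternative would analyze the commutative subring $\mathbb{Z}[K/A]\subset M_{m}(\mathbb{Z})$ and apply Dirichlet's unit theorem to its group of units, which is more elementary but considerably longer.
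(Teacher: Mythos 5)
Your proof is correct and follows essentially the same route as the paper's: pass to a torsion-free finite-index subgroup, take a maximal normal abelian subgroup $A\cong\mathbb{Z}^{m}$, use Lemma \ref{6.2} to embed the quotient into $\mathrm{GL}_{m}(\mathbb{Z})$, triangularize via Lie--Kolchin, rule out nontrivial unipotents with Lemma \ref{nil}, and invoke Mal'cev's polycyclicity theorem to get finite generation of the quotient. Your treatment of the unipotent contradiction and of the final non-nilpotency claim is somewhat more detailed than the paper's, but the underlying argument is the same.
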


\begin{proof}[Proof of Theorem \protect\ref{th2}]
We use the notation as in the proof of Theorem \ref{th1}. By the assumption, 
$A=\mathbb{Z}^{m}$ for some $n.$ Since the group $K_{1}/A<\mathrm{GL}_{n}(%
\mathbb{Z})$ is solvable, it is poly-cyclic. In particular, $K_{1}/A$ is
finitely generated abelian group and there is a finite-index subgroup $%
\mathbb{Z}^{k}$ for some $k.$ Let $B<K_{1}$ be the preimage of $\mathbb{Z}%
^{k}<K_{1}/A$ to finish the proof. If $B$ is virtually abelian, this is the
first case. If $B$ is not virtually abelian, it is cannot be nilpotent by
Lemma \ref{nil}. This means that each non-identity element in $\mathbb{Z}%
^{k} $ acts by a non-unipotent invertible matrix.
\end{proof}

\bigskip

\begin{proof}[Proof of Theorem \protect\ref{th3}]
Let $G$ be such a discretely purely positive solvable group of finite
virtual cohomological dimension. Since $G$ is of finite virtual
cohomological dimension, we assume that $G$ itself is torsion-free. Let $A$
be a maximal normal abelian subgroup. Since $G$ has a discrete length
function, $A$ is not divisible. Since $A$ has a finite cohomological
dimension, we know $A$ is isomorphic to $\mathbb{Z}^{n}.$ By Lemma \ref{6.2}%
, the quotient group $G/A$ acts effectively on $A.$ This means that $G/A$ is
isomorphic to a subgroup of $\mathrm{GL}_{n}(\mathbb{Z}).$ Lemma \ref{finite}
implies that $G/A$ is a torsion group. However, every solvable subgroup of $%
\mathrm{GL}_{n}(\mathbb{Z})$ is polycyclic (cf. \cite{s}, Cor 1, page 26).
Therefore, $G/A$ is a polycyclic torsion group and thus is finite.
\end{proof}

\bigskip

\begin{proof}[Proof of Theorem \protect\ref{th-1}]
Let $\mathrm{Diff}^{\infty }(M)$ be the group of diffeomorphisms of $M.$ For
any $f,g\in \mathrm{Diff}^{\infty }(M)$ and integer $n,$ it is well-known
that the topological entropy satisfies $h_{\mathrm{top}}(f^{n})=|n|h_{%
\mathrm{top}}(f)$ and $h_{\mathrm{top}}(f)=h_{\mathrm{top}}(gfg^{-1})$ (cf. 
\cite{kat}, Cor. 3.14 and Prop. 3.1.7, page 111). Hu \cite{hu} (Theorem C)
proves that $h_{\mathrm{top}}(fg)\leq h_{\mathrm{top}}(f)+h_{\mathrm{top}%
}(g) $ when $fg=gf.$ This shows that the topological entropy $h_{\mathrm{top}%
}$ is a length function on the group $\mathrm{Diff}^{\infty }(M).$ By
Theorem \ref{th0}, the group $H$ is virtually either abelian or
meta-abelian. In any case, $H$ is virtually abelian-by-abelian. If there is
a uniform lower bound of the topological entropies, the length function is
discrete. Theorem \ref{th3} implies that $H$ is virtually a finitely
generated abelian group $\mathbb{Z}^{n}.$

In the case of Lyapunov exponents, note that for any $x\in M,u\in
T_{x}M,f\in \mathrm{Diff}^{\infty }(M)$ the Lyapunov exponent%
\begin{equation*}
\chi (x,u,f)=\lim_{n\rightarrow \infty }\frac{\log \Vert D_{x}f^{n}u\Vert }{n%
}\leq \sup_{x\in M}\lim_{n\rightarrow \infty }\frac{\log \Vert
D_{x}f^{n}\Vert }{n}.
\end{equation*}%
Actually, 
\begin{equation*}
l(f):=\max \{\sup_{x\in M}\lim_{n\rightarrow \infty }\frac{\log \Vert
D_{x}f^{n}\Vert }{n},\sup_{x\in M}\lim_{n\rightarrow -\infty }\frac{\log
\Vert D_{x}f^{n}\Vert }{n}\}
\end{equation*}%
is a length function on $\mathrm{Diff}^{\infty }(M)$ (see \cite{ye}, Lemma
2.5). A similar argument finishes the proof.
\end{proof}

\begin{theorem}
\label{th4}Let $G$ be a solvable group with a purely positive length
function. Suppose that $G$ has virtual cohomological dimension $vcd(G)\leq 3$
and any abelian subgroup $A$ is finitely generated. Then $G$ is a finite
extension of $\mathbb{Z}^{n}$ for some integer $n\leq 3.$
\end{theorem}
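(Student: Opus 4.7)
The plan is to invoke Theorem \protect\ref{th2} and then use the bound $vcd(G)\le 3$ to exclude its ``non-nilpotent abelian-by-abelian'' alternative. The hypotheses of Theorem \protect\ref{th2} are in force: $G$ is solvable with a purely positive length function, every abelian subgroup is finitely generated, and $vcd(G)<\infty$ forces $G$ to be virtually torsion-free. Applying that theorem yields a finite-index subgroup that is either $\mathbb{Z}^n$ (in which case $n=vcd(G)\le 3$ and we are done) or a non-nilpotent torsion-free polycyclic group $B$ fitting into
\[
1\to\mathbb{Z}^m\to B\to\mathbb{Z}^k\to 1
\]
with $\mathbb{Z}^k$ acting effectively on $\mathbb{Z}^m$ by non-unipotent matrices. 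Since $B$ is torsion-free polycyclic, its Hirsch length $m+k$ equals $vcd(B)=vcd(G)\le 3$, so $m+k\le 3$.

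I would then rule out each of the remaining possibilities $(m,k)\in\{(1,1),(1,2),(2,1)\}$. When $m=1$, the group $\mathrm{Aut}(\mathbb{Z})=\{\pm 1\}$ is finite, so no infinite subgroup of it can act effectively; this eliminates $(1,1)$ and $(1,2)$ immediately.

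The only case requiring genuine work is $(2,1)$: the generator $A\in\mathrm{GL}_2(\mathbb{Z})$ has infinite order, and by the hypothesis every power $A^j$ with $j\ne 0$ is non-unipotent. I would argue by cases on the eigenvalues of $A$. If $A$ has an eigenvalue of norm $1$, then (since $A$ is a $2\times 2$ integer matrix of determinant $\pm 1$) either both eigenvalues are non-real roots of unity and $A$ has finite order, contradicting effectiveness, or $A$ is conjugate over $\mathbb{C}$ to $\pm I+N$ for some nonzero nilpotent $N$; in the $+$ subcase $A$ itself is unipotent, while in the $-$ subcase $A^2=I-2N$ is unipotent, each of which contradicts the non-unipotence hypothesis. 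Hence $A$ has no eigenvalue of norm $1$ and is therefore diagonalizable over $\mathbb{C}$, so Lemma \protect\ref{norm1} forces every length function on $\mathbb{Z}^2\rtimes_A\mathbb{Z}$ to vanish on $\mathbb{Z}^2$, contradicting the pure positivity of the length function inherited by $B$. The main subtlety is precisely the $-I+N$ possibility, which is why one must invoke the non-unipotence hypothesis on $A^2$ rather than only on $A$.
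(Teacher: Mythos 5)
Your proposal is correct and follows the same skeleton as the paper's proof: invoke Theorem \ref{th2}, bound $m+k\le 3$ by the cohomological dimension of the torsion-free finite-index subgroup, dispose of $m=1$ (and $m=3$, $k=0$) trivially, and reduce everything to a single matrix $\phi\in\mathrm{GL}_2(\mathbb{Z})$ in the case $(m,k)=(2,1)$. The only real divergence is in the hardest sub-case, where $\phi$ has a norm-one eigenvalue but is not semisimple, i.e.\ is conjugate to $\pm I+N$. The paper handles this by extending the length function to $\mathbb{Q}^2$ via Lemma \ref{key}, choosing a rational Jordan-type basis $\{v_1,v_2\}$, and computing $m|x|\,l(v_1)\le 2l(v_2)$ from $J^m v_2=mxv_1+v_2$ to force $l(v_1)=0$ directly; you instead short-circuit this by citing the ``effectively by non-unipotent matrices'' clause already recorded in the statement of Theorem \ref{th2}, correctly noting that in the $-I+N$ case it is $\phi^2$ (not $\phi$) that is a nontrivial unipotent, so the hypothesis must be applied to the element $2\in\mathbb{Z}^k$. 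Both resolutions are valid: yours is shorter and reuses the conclusion of Theorem \ref{th2} more fully, while the paper's is self-contained at this point and does not lean on the non-unipotence clause (whose justification in the proof of Theorem \ref{th2} is itself rather terse). Two small points: your eigenvalue dichotomy omits the semisimple real cases $\phi\sim I$, $-I$, or $\mathrm{diag}(1,-1)$, but these have finite order and die by the same effectiveness argument as the non-real root-of-unity case; and in the first alternative you only need $n=vcd$ of the finite-index free abelian subgroup, which indeed gives $n\le 3$.
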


\begin{proof}[Proof of Theorem \protect\ref{th4}]
By Theorem \ref{th2}, the group $G$ is virtually either $\mathbb{Z}^{n}$ or
a non-nilpotent group $B$ of the form%
\begin{equation*}
1\rightarrow \mathbb{Z}^{m}\rightarrow B\rightarrow \mathbb{Z}%
^{k}\rightarrow 1
\end{equation*}%
for some non-negative integers $n,m,k.$ It is enough to rule out the second
case. Since cohomological dimension of $B$ is at most $3,$ we see that $%
m+k\leq 3.$ When $m=1$ or $m=3,$ the group $B$ is virtually abelian since $%
\mathbb{Z}^{k}$ acts effectively on $\mathbb{Z}^{m}$. When $m=2,$ the group $%
B=\mathbb{Z}^{2}\rtimes _{\phi }\mathbb{Z},$ a semi-direct product for some $%
\phi \in \mathrm{GL}_{2}(\mathbb{Z}).$ Consider the Jordan canonical form of 
$\phi .$ If all the eigenvalues $\lambda _{1},\lambda _{2}$ of $\phi $ have
norm different from $1,$ any length function will vanish on $\mathbb{Z}^{2}$
by Lemma \ref{norm1}. Otherwise, $\lambda _{1},\lambda _{2}=\pm 1$ or $%
\lambda _{1},\lambda _{2}=\bar{\lambda}_{1}$ are conjugate complex numbers
of norm $1.$ If $\phi $ is $\mathbb{C}$-diagonalizable, $\phi $ is of finite
order, in which case $B$ is virtually abelian. Otherwise, $\phi $ is
conjugate (in $\mathrm{GL}_{n}(\mathbb{C})$) to%
\begin{equation*}
\begin{bmatrix}
1 & 1 \\ 
0 & 1%
\end{bmatrix}%
\text{ or }%
\begin{bmatrix}
-1 & 1 \\ 
0 & -1%
\end{bmatrix}%
,
\end{equation*}%
in which case the matrix $\phi $ has the repeated eigenvalues $1$ or $-1.$
We can choose a basis $\{v_{1},v_{2}\}\subset \mathbb{Q}^{2}$ such that the
representation matrix of $\phi :\mathbb{Q}^{2}\rightarrow \mathbb{Q}^{2}$
with respect to the basis $\{v_{1},v_{2}\}$ is of the form%
\begin{equation*}
J=%
\begin{bmatrix}
1 & x \\ 
0 & 1%
\end{bmatrix}%
\text{ or }L=%
\begin{bmatrix}
-1 & y \\ 
0 & -1%
\end{bmatrix}%
\end{equation*}%
for some nonzero $x,y\in \mathbb{Q}.$ Note that a purely positive length
function on $\mathbb{Z}^{2}\rtimes _{\phi }\mathbb{Z}$ can be extended to be
a purely positive length function on $\mathbb{Q}^{2}$ by Lemma \ref{key}.
For any positive even integer $m,$ we have 
\begin{eqnarray*}
J^{m}v_{2} &=&mxv_{1}+v_{2},\text{ or} \\
L^{m}v_{2} &=&-myv_{1}+v_{2}
\end{eqnarray*}%
and 
\begin{eqnarray*}
m|x|l(v_{1}) &\leq &l(J^{m}v_{2})+l(v_{2})=2l(v_{2}),\text{ or} \\
m|y|l(v_{1}) &\leq &l(J^{m}v_{2})+l(v_{2})=2l(v_{2}).
\end{eqnarray*}%
Since $m$ is arbitrary, we get $l(v_{1})=0.$ This is a contradiction to the
fact that $l$ is purely positive. The proof is finished.
\end{proof}

\begin{remark}
Some versions of Theorem \ref{th0} and Theorem \ref{th0} are obtained by
Conner for length functions coming from semi-norms on groups. However, many
length functions are not from semi-norms (see \cite{ye}, Section 2, or the
final section of the current paper).
\end{remark}

\section{Stable norms}

Following \cite{conner, cf}, a real-valued function $L:G\rightarrow \lbrack
0,\infty )$ on a group $G$ is called a (semi-)norm if $L(gh)\leq
L(g)+L(h),L(g)=L(g^{-1})$ for any $g,h\in G.$ It's well-known that the
stable norm%
\begin{equation*}
\mathrm{sL}(g):=\lim_{n\rightarrow \infty }\frac{L(g^{n})}{n}
\end{equation*}%
is a length function (see \cite{ye}). In this section, we study stable norms
on the Heisenberg group 
\begin{eqnarray*}
H &=&\langle a,b,c\mid ac=ca,bc=cb,[a,b]=c\rangle  \\
&=&\{%
\begin{bmatrix}
1 & x & z \\ 
& 1 & y \\ 
&  & 1%
\end{bmatrix}%
:x,y,z\in \mathbb{Z}\}.
\end{eqnarray*}%
Choose $S=\{a,b,a^{-1},b^{-1}\}$ to be a generating set and let $|g|_{S}$ be
the word length defined by $S$ for 
\begin{equation*}
g=%
\begin{bmatrix}
1 & x & z \\ 
& 1 & y \\ 
&  & 1%
\end{bmatrix}%
\in H.
\end{equation*}%
Denote $d(x,y,z)=|g|_{S}$ the word length. Blachere \cite{B} shows that $%
d(x,y,z)=d(-x,y,-z)=d(x,-y,-z)=d(-x,-y,z)=d(y,x,z)$ for any $x,y,z\in 
\mathbb{Z}$. The following is from Theorem 2.2 of \cite{B}.

\begin{lemma}
\label{nilword}Assume that $z\geq 0,x\geq 0,x\geq y\geq -x.$ When $y\geq
0,x\leq \sqrt{z},$ we have $d(x,y,z)=2\lceil 2\sqrt{z}\rceil -x-y.$ When $%
y\geq 0,x\geq \sqrt{z}$ and $xy>z,$ we have $d(x,y,z)=x+y.$
\end{lemma}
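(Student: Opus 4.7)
The plan is to follow the classical lattice-path interpretation of the word metric on $H$, which is the framework used in Blachère's paper. Every word $w$ in the generators $S = \{a^{\pm 1}, b^{\pm 1}\}$ corresponds to a lattice path $\gamma_w$ in $\mathbb{Z}^2$ starting at the origin, with the four letters $a, a^{-1}, b, b^{-1}$ acting as unit east, west, north and south steps. The endpoint of $\gamma_w$ is the image of $w$ in the abelianization $H^{\mathrm{ab}} \cong \mathbb{Z}^2$, namely $(x,y)$, and the $c$-exponent $z$ in the normal form $a^x b^y c^z$ equals (up to a fixed sign fixed by the product rule on $H$) the algebraic area enclosed between $\gamma_w$ and the straight chord from $(x,y)$ back to the origin. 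Thus $d(x,y,z)$ is the length of a shortest lattice path from $(0,0)$ to $(x,y)$ with prescribed signed area $z$. The symmetries recorded above, $d(x,y,z) = d(-x,y,-z) = d(x,-y,-z) = d(-x,-y,z) = d(y,x,z)$, then reduce the problem to the octant $z \ge 0$, $x \ge y \ge 0$ in which the lemma is stated.

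For Case 2 ($x \ge \sqrt{z}$ and $xy > z$), the lower bound $d(x,y,z) \ge x+y$ is immediate, since the abelianization quotient is distance-decreasing onto the $\ell^1$ metric on $\mathbb{Z}^2$. For the matching upper bound, I would consider the monotone lattice paths (east and north steps only) from $(0,0)$ to $(x,y)$: each has length exactly $x+y$, and as the ordering of the steps is varied the enclosed signed areas fill the whole integer interval $[0, xy]$. Since $0 \le z \le xy$ by hypothesis, at least one such path realizes $z$, and the corresponding positive word in $a,b$ has length $x+y$.

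For Case 1 ($x \le \sqrt{z}$), both bounds are isoperimetric in nature. For the lower bound, if $\gamma_w$ has length $n$ and encloses signed area $z$, then closing it with the chord from $(x,y)$ to $(0,0)$ produces a closed lattice loop of length $n + x + y$; the classical discrete isoperimetric inequality bounds its enclosed area by $\lfloor (n+x+y)/4 \rfloor \cdot \lceil (n+x+y)/4 \rceil$, and solving for $n$ (using $x \le \sqrt{z}$ to discard lower-order terms) yields $n \ge 2\lceil 2\sqrt{z}\rceil - x - y$. For the upper bound I would exhibit an explicit path: choose positive integers $p \ge q$ with $pq \ge z$ minimizing $p+q$ (this minimum is $\lceil 2\sqrt{z}\rceil$ by a short arithmetic argument), traverse the $p \times q$ rectangle counterclockwise, trim $pq - z$ unit squares from a corner by a staircase detour to cut the enclosed area down to exactly $z$, and finally shorten the last two sides so that the terminal vertex is $(x,y)$ rather than $(0,0)$; the hypothesis $x \le \sqrt{z} \le q$ ensures the correction fits inside the rectangle. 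A direct length count recovers $2\lceil 2\sqrt{z}\rceil - x - y$.

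The main obstacle is the precise optimization in Case 1: matching the exact ceiling $2\lceil 2\sqrt{z}\rceil$ rather than merely the asymptotic $4\sqrt{z}$ forces a parity argument (closed lattice loops have even length, so $n$ and $x+y$ have the same parity) together with a careful split by where $z$ lies between two consecutive squares, so that the rectangular-plus-staircase construction in the upper bound exactly meets the isoperimetric lower bound. Getting these subcases to align without an off-by-one discrepancy is the delicate step.
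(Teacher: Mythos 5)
The paper does not actually prove this lemma: it is quoted verbatim from Theorem 2.2 of Blach\`ere \cite{B}, so there is no in-paper argument to compare against. Your reconstruction follows exactly the route of Blach\`ere's original proof --- interpret words in $a^{\pm1},b^{\pm1}$ as lattice paths in $\mathbb{Z}^{2}$ ending at $(x,y)$ with prescribed algebraic area $z$, get the lower bound from the abelianization in the monotone case and from a discrete isoperimetric inequality in the balanced case, and match it with explicit rectangle-plus-staircase paths. Your Case 2 argument is complete and correct (monotone paths of length $x+y$ realize every integer area in $[0,xy]$, and $0\le z<xy$ by hypothesis), and your Case 1 lower bound is essentially complete: closing the path with the L-shaped return of length $x+y$ gives an even loop length $n+x+y$, the isoperimetric bound gives $n+x+y\ge 4\sqrt{z}$, and the smallest even integer $\ge 4\sqrt z$ is indeed $2\lceil 2\sqrt z\rceil$. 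Note, however, that the ``classical discrete isoperimetric inequality'' for the \emph{signed} area of a possibly self-intersecting closed lattice loop is itself a nontrivial lemma that you are invoking without proof.

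The one concrete gap is in the Case 1 upper bound, and it is precisely the step you flag as delicate. In the Heisenberg product the $c$-coordinate of a word is $\sum(\text{current }x)\cdot(\Delta y)$ over its vertical steps, so ``shortening the last two sides'' of the $p\times q$ rectangle is \emph{not} area-neutral: deleting $q-y$ south steps taken at abscissa $x$ changes the accumulated area by $x(q-y)$, which can eat the entire trimming budget $pq-z$ (e.g.\ $z=100$, $p=q=10$, $x=10$, $y=0$ gives remaining area $0$, not $100$). The fix is to position and orient the rectangle so that the truncated edges are the zero-contribution ones (the left edge at abscissa $0$ and the top edge, whose west steps contribute nothing), and only then trim $pq-z$ unit squares by a staircase away from the endpoint; one must also check $q=\lfloor\lceil 2\sqrt z\rceil/2\rfloor\ge x\ge y$, which is where the hypothesis $x\le\sqrt z$ enters. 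None of this is hard, but as written your construction does not produce a word of the claimed length with the claimed area, so the proposal is a correct plan rather than a proof.
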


\begin{lemma}
\label{good}Let $L:H\rightarrow \lbrack 0,\infty )$ be a semi-norm on the
Heisenberg group $H.$ There are constants $K,C\geq 0$ such that%
\begin{equation*}
L(c^{n})\leq K\sqrt{n}+C
\end{equation*}%
for any integer $n\geq 0.$
\end{lemma}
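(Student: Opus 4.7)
The plan is to reduce the bound on $L(c^n)$ to a bound on the word length of $c^n$ with respect to the generating set $S = \{a, b, a^{-1}, b^{-1}\}$, and then plug in the sharp estimate from Lemma \ref{nilword}. Since $c^n$ corresponds to the triple $(x,y,z) = (0,0,n)$ in the matrix presentation of $H$, all the hypotheses of the first case of Lemma \ref{nilword} are satisfied trivially ($z = n \geq 0$, $x = 0 \geq y = 0 \geq -x$, $y \geq 0$, and $x = 0 \leq \sqrt{z}$), which yields
\begin{equation*}
d(0,0,n) \;=\; 2\lceil 2\sqrt{n}\,\rceil \;\leq\; 4\sqrt{n} + 2.
\end{equation*}

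Next I would invoke the semi-norm axioms. Writing $c^n = s_1 s_2 \cdots s_k$ as a geodesic word in $S$, where $k = d(0,0,n)$, repeated application of the subadditivity $L(gh) \leq L(g) + L(h)$ together with $L(a^{-1}) = L(a)$ and $L(b^{-1}) = L(b)$ produces
\begin{equation*}
L(c^n) \;\leq\; \sum_{i=1}^{k} L(s_i) \;\leq\; k \cdot M,
\end{equation*}
with $M = \max\{L(a), L(b)\}$. Combining this with the word-length bound above gives the desired estimate with $K = 4M$ and $C = 2M$.

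There is essentially no obstacle in this argument: the entire content of the statement is encapsulated by the classical $\sqrt{n}$-growth of the word length of the central elements of the integer Heisenberg group, which is precisely what Lemma \ref{nilword} records from Blach\`ere's work. Worth emphasizing is that the argument uses only the semi-norm axioms (subadditivity and symmetry under inversion), not conjugation invariance, so the bound applies to arbitrary semi-norms, which is exactly what is needed to feed into the stable-norm construction $\mathrm{sL}$ and ultimately to derive Theorem \ref{th5} on $\|Dh_{x}^{n}\|$.
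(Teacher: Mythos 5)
Your proof is correct and is essentially identical to the paper's: both reduce $L(c^n)$ to the word length $|c^n|_S = 2\lceil 2\sqrt{n}\rceil$ via Lemma \ref{nilword} and subadditivity of the semi-norm, yielding the bound with $K=4M$ and a constant $C$ of the same nature. No further comment is needed.
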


\begin{proof}
Let $|x|_{S}$ be the word length of $x\in H$ with respect to the generating
set $S=\{f,g,f^{-1},g^{-1}\}.$ Suppose that $x=s_{1}s_{2}\cdots s_{|x|_{S}}$
with each $s_{i}\in S.$ It is clear that%
\begin{eqnarray*}
L(x) &\leq &L(s_{1})+L(s_{2})+\cdots +L(s_{|x|_{S}}) \\
&\leq &|x|_{S}\max \{L(s):s\in S\}.
\end{eqnarray*}%
Choose an integer $k$ such that $k^{2}\leq n<(k+1)^{2}.$ We have 
\begin{eqnarray*}
L(c^{n}) &\leq &|c^{n}|_{S}\max \{L(s):s\in S\} \\
&=&2\lceil 2\sqrt{n}\rceil \max \{L(s):s\in S\} \\
&\leq &K\sqrt{n}+C
\end{eqnarray*}%
for $K=4\max \{L(s):s\in S\},C=2\max \{L(s):s\in S\}+2.$ Note that the word
length $|c^{n}|_{S}=2\lceil 2\sqrt{n}\rceil ,$ twice the upper integer of $2%
\sqrt{n},$ by Lemma \ref{nilword}.
\end{proof}

\bigskip

\begin{proof}[Proof of Theorem \protect\ref{th5}]
For any smooth diffeomorphism $f_{1},$ we have that 
\begin{equation*}
\Vert Df_{1x}\Vert \leq \max \{\sup_{x\in M}\Vert Df_{1x}\Vert ,\sup_{x\in
M}\Vert Df_{1x}^{-1}\Vert \}.
\end{equation*}%
Let $L(f_{1})=\log \max \{\sup_{x\in M}\Vert Df_{1x}\Vert ,\sup_{x\in
M}\Vert Df_{1x}^{-1}\Vert \}.$ It's not hard to check that $%
L(f_{1}f_{2})\leq L(f_{1})+L(f_{2})$ and $L(f_{1})=L(f_{2}^{-1})$ for any
diffeomorphisms $f_{1},f_{2}.$ Lemma \ref{good} finishes the proof.
\end{proof}

\bigskip

\begin{corollary}
\label{swl}For any 
\begin{equation*}
g=%
\begin{bmatrix}
1 & x & z \\ 
& 1 & y \\ 
&  & 1%
\end{bmatrix}%
\in H,
\end{equation*}%
we have the stable word length 
\begin{equation*}
\mathrm{swl}_{S}(g)=|x|+|y|.
\end{equation*}
\end{corollary}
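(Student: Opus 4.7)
The plan is to sandwich $\mathrm{swl}_S(g)$ between $|x|+|y|$ from both sides.

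For the lower bound, I would pass to the abelianization. The quotient map $\pi:H\to H/[H,H]\cong \mathbb{Z}^{2}$ sends $a\mapsto(1,0)$, $b\mapsto(0,1)$, $c\mapsto 0$, and hence $g\mapsto(x,y)$. Since $\pi$ is a surjective homomorphism sending $S$ into the standard symmetric generating set of $\mathbb{Z}^{2}$, and the $\ell^{1}$-word length on $\mathbb{Z}^{2}$ with respect to that set is $|x|+|y|$, one has
\begin{equation*}
|g^{n}|_{S}\ \geq\ |\pi(g^{n})|_{\pi(S)}\ =\ n|x|+n|y|.
\end{equation*}
Dividing by $n$ and letting $n\to\infty$ gives $\mathrm{swl}_{S}(g)\geq|x|+|y|$.

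For the upper bound, the key observation is that $c$ is central in $H$, so the ``Heisenberg twist'' inside $g^{n}$ can be factored out cleanly. A direct matrix computation shows $g=a^{x}b^{y}c^{z-xy}$, and centrality of $c$ then yields
\begin{equation*}
g^{n}\ =\ (a^{x}b^{y})^{n}\,c^{n(z-xy)}.
\end{equation*}
By subadditivity of the word length, $|(a^{x}b^{y})^{n}|_{S}\leq n\,|a^{x}b^{y}|_{S}\leq n(|x|+|y|)$, while Lemma \ref{nilword} (applied to the element $c^{m}$, taking $m=|n(z-xy)|$ and using $|c^{-1}|_{S}=|c|_{S}$) gives $|c^{n(z-xy)}|_{S}\leq 2\lceil 2\sqrt{|n(z-xy)|}\,\rceil=O(\sqrt{n})$. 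Combining,
\begin{equation*}
|g^{n}|_{S}\ \leq\ n(|x|+|y|)+O(\sqrt{n}),
\end{equation*}
so $\mathrm{swl}_{S}(g)\leq|x|+|y|$, and the two inequalities together prove the corollary.

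I do not expect a real obstacle here: once the decomposition $g=a^{x}b^{y}c^{z-xy}$ is in hand, everything reduces to two facts already established, namely that $c$ is central (providing the commuting split) and that $|c^{m}|_{S}$ grows like $\sqrt{|m|}$ rather than linearly. The only mildly delicate point is keeping track of sign conventions when $z-xy<0$, which is handled by the inversion-invariance of the word length, and noting that the case $x=y=0$ is covered trivially since both sides equal $0$.
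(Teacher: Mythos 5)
Your proof is correct, but it takes a genuinely different route from the paper's. The paper first replaces $g$ by $g_{0}$ (the same matrix with $z=0$), using that $\mathrm{swl}$ vanishes on the center and is subadditive on commuting elements, and then computes $|g_{0}^{n}|_{S}$ \emph{exactly} via a four-way case analysis with Blachere's formula (Lemma \ref{nilword}), reading off the limit. You instead sandwich $\mathrm{swl}_{S}(g)$: the lower bound comes from the abelianization $\pi:H\to\mathbb{Z}^{2}$, $g\mapsto(x,y)$, together with the elementary fact that a homomorphism carrying $S$ into the standard generators of $\mathbb{Z}^{2}$ cannot increase word length; the upper bound comes from the decomposition $g=a^{x}b^{y}c^{z-xy}$ (which checks out by direct matrix multiplication), centrality of $c$ giving $g^{n}=(a^{x}b^{y})^{n}c^{n(z-xy)}$, and the bound $|c^{m}|_{S}=O(\sqrt{|m|})$ from the $d(0,0,m)$ case of Lemma \ref{nilword}. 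Your argument uses Blachere's formula only through the sublinear growth of central elements, and the lower bound needs no input from it at all, so it is more robust (it would adapt to other generating sets, identifying $\mathrm{swl}$ with the induced norm on the abelianization); the paper's computation, by contrast, yields the stronger exact statement $|g_{0}^{n}|_{S}=n(|x|+|y|)$ for every $n$, not just asymptotically. Both are valid proofs of the corollary.
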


\begin{proof}
For any 
\begin{equation*}
g=%
\begin{bmatrix}
1 & x & z \\ 
& 1 & y \\ 
&  & 1%
\end{bmatrix}%
\in H,
\end{equation*}%
we have $\mathrm{swl}(g)=\mathrm{swl}(g_{0})$ for $g_{0}=%
\begin{bmatrix}
1 & x & 0 \\ 
& 1 & y \\ 
&  & 1%
\end{bmatrix}%
$ since $\mathrm{swl}$ is a length function vanishing on the center of $H.$
For each positive integer $n,$ we have%
\begin{equation*}
g_{0}^{n}=%
\begin{bmatrix}
1 & nx & \frac{n(n-1)xy}{2} \\ 
& 1 & ny \\ 
&  & 1%
\end{bmatrix}%
.
\end{equation*}%
After replacing $g_{0}$ by $g_{0}^{-1},$ we can always assume that $x\geq 0.$
When $x\geq y\geq 0,$ Lemma \ref{nilword} implies that 
\begin{eqnarray*}
|g_{0}^{n}| &=&nx+ny, \\
\mathrm{swl}(g_{0}) &=&x+y.
\end{eqnarray*}%
When $y>x,$ we have 
\begin{eqnarray*}
|g_{0}^{n}| &=&d(nx,ny,\frac{n(n-1)xy}{2})=d(ny,nx,\frac{n(n-1)xy}{2}) \\
&=&ny+nx, \\
\mathrm{swl}(g_{0}) &=&ny+nx.
\end{eqnarray*}%
When $0\geq y>-x,$ we have 
\begin{eqnarray*}
|g_{0}^{n}| &=&d(nx,ny,\frac{n(n-1)xy}{2})=d(nx,n|y|,\frac{n(n-1)x|y|}{2}) \\
&=&nx+n|y|.
\end{eqnarray*}%
When $y\leq -x,$ we have%
\begin{eqnarray*}
|g_{0}^{n}| &=&d(nx,ny,\frac{n(n-1)xy}{2})=d(nx,n|y|,\frac{n(n-1)x|y|}{2}) \\
&=&d(n|y|,nx,\frac{n(n-1)x|y|}{2})=n|y|+nx.
\end{eqnarray*}%
In any case, we have $\mathrm{swl}(g)=|x|+|y|.$
\end{proof}

It's not hard to see that the $\mathrm{swl}$ gives a norm on the
abelianization $H/\langle c\rangle $ which is generated by the image of $a,b.
$

\begin{corollary}
Let $L:H\rightarrow \mathbb{R}$ be a semi-norm, i.e. $L(gh)\leq
L(g)+L(h),L(g)=L(g^{-1})$ for any $g,h\in H.$ There exists constant $K$ such
that 
\begin{equation*}
\mathrm{sL}(g)\leq K(|x|+|y|)
\end{equation*}%
for any 
\begin{equation*}
g=%
\begin{bmatrix}
1 & x & z \\ 
& 1 & y \\ 
&  & 1%
\end{bmatrix}%
\in H.
\end{equation*}
\end{corollary}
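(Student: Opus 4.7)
The plan is to bound $L$ on an arbitrary element by the word length with respect to $S = \{a,b,a^{-1},b^{-1}\}$, pass to the stable versions, and then invoke Corollary \ref{swl}. Concretely, set $K_0 = \max\{L(s) : s \in S\}$. For any $g \in H$ and any positive integer $n$, write $g^n = s_1 s_2 \cdots s_m$ as a shortest word in $S$, so that $m = |g^n|_S$. Iterated subadditivity of the semi-norm $L$ gives
\begin{equation*}
L(g^n) \leq L(s_1) + L(s_2) + \cdots + L(s_m) \leq m K_0 = |g^n|_S \cdot K_0.
\end{equation*}

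Dividing by $n$ and letting $n \to \infty$, both sides converge: the left side to $\mathrm{sL}(g)$ by definition, and the right side to $K_0 \cdot \mathrm{swl}_S(g)$. Invoking Corollary \ref{swl}, $\mathrm{swl}_S(g) = |x| + |y|$, so
\begin{equation*}
\mathrm{sL}(g) \leq K_0 (|x| + |y|),
\end{equation*}
which is the desired conclusion with $K = K_0$.

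There is no real obstacle here: the bound $L(g) \leq |g|_S \cdot K_0$ is the standard fact that a subadditive function on a group is dominated by the word length times its maximum on a generating set (already used inside the proof of Lemma \ref{good}), and after stabilization the central component $z$ drops out automatically, reflecting the fact that $\mathrm{sL}$ vanishes on the center $\langle c \rangle$ (as seen from Lemma \ref{good}: $\mathrm{sL}(c) = \lim L(c^n)/n \leq \lim (K\sqrt{n}+C)/n = 0$). The only nontrivial input is Corollary \ref{swl}, which has already been established.
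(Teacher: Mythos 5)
Your proof is correct and is essentially identical to the paper's: both bound $L(g^{n})$ by $|g^{n}|_{S}\max\{L(s):s\in S\}$ via subadditivity, divide by $n$, stabilize to get $\mathrm{sL}(g)\leq K\,\mathrm{swl}_{S}(g)$, and then apply Corollary \ref{swl}. The extra remark about the central coordinate $z$ dropping out is a nice sanity check but not needed.
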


\begin{proof}
For each integer $k,$ write $g^{k}=s_{1}s_{2}...s_{n}$ for some $s_{i}\in
S=\{a,b,a^{-1},b^{-1}\}$ and $n=|g^{k}|_{S}.$ Then we have%
\begin{eqnarray*}
L(g^{k}) &\leq &|g^{k}|_{S}\max \{L(s):s\in S\}, \\
\mathrm{sL}(g) &=&\lim \frac{L(g^{k})}{k}\leq \mathrm{swl}(g)\max
\{L(s):s\in S\}.
\end{eqnarray*}%
Choose $K=\max \{L(s):s\in S\}$ and apply Corollary \ref{swl} to finish the
proof.
\end{proof}

\begin{corollary}
There exist length functions on $H$ which are not stable norms.
\end{corollary}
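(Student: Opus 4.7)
The plan is to derive a contradiction with the bound established in the preceding corollary: every stable norm $\mathrm{sL}$ on $H$ satisfies $\mathrm{sL}(g)\leq K(|x|+|y|)$ for $K=\max_{s\in S}L(s)$. So one seeks a length function on $H$ whose values cannot be produced by any single finite constant of this form coming from a semi-norm on $H$.

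First I would verify that every length function $\ell$ on $H$ descends to a norm on the abelianization $\mathbb{Z}^{2}$. Using the identity $bab^{-1}=ac^{-1}$ (direct matrix computation in the Heisenberg coordinates), conjugation invariance gives $\ell(ac^{-k})=\ell(b^{k}ab^{-k})=\ell(a)$; then, since $a$ and $c^{k}a^{-1}$ commute (their products in either order both equal the central element $c^{k}$), subadditivity on commuting pairs yields
\[
\ell(c^{k})=\ell(a\cdot c^{k}a^{-1})\leq\ell(a)+\ell(ac^{-k})=2\ell(a)
\]
for every $k$, forcing $\ell(c)=0$. Hence $\ell$ factors through $\pi\colon H\to\mathbb{Z}^{2}$, and corresponds to a norm on $\mathbb{Z}^{2}$. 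Combined with Lemma \ref{key}, this lets me pass freely to the continuous extension on $\mathbb{R}^{2}$.

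Next I would look for a specific length function that separates from the class of stable norms. A natural candidate is the pullback $\ell$ of the Euclidean norm $\|\cdot\|_{2}$ on $\mathbb{Z}^{2}$; this is a length function on $H$ since it inherits homogeneity, conjugation invariance, and subadditivity from the abelian norm. The claim is then that no semi-norm $L$ on $H$ can realize $\mathrm{sL}=\ell$.

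The hardest step will be ruling out \emph{all} semi-norms. The naive attempt $L=\ell$ is problematic: since $\ell$ factors through an abelian quotient, it is automatically subadditive on all pairs of $H$, making $\ell$ itself a semi-norm. Resolving this requires invoking a finer interpretation of "stable norm" implicit in the prior corollary — namely that the relevant $L$ be controlled by its values on a finite generating set $S$, so that the stable norm inherits the polygonal structure of the induced word-length unit ball. Under that interpretation, the strict convexity of the Euclidean unit ball on $\mathbb{Z}^{2}$ rules out $\ell$ being any such $\mathrm{sL}$, because a pointwise limit of rescalings of the polytope $\{L\leq 1\}$ would still be polygonal, contradicting strict convexity of $\{\ell\leq 1\}$. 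Formalizing this polygonality constraint — and making precise which class of semi-norms is tacitly meant — is the main delicate point.
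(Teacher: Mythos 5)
There is a genuine gap, and it sits exactly where you flagged "the main delicate point." Your opening computation is fine: any length function on $H$ vanishes on the center (this is the paper's Lemma \ref{nil} applied to $H$), and hence is constant on cosets of $\langle c\rangle$, so it descends to a homogeneous function on $\mathbb{Z}^{2}$. But the next claim --- that the descended function is a \emph{norm} on $\mathbb{Z}^{2}$ --- is false. A length function is only required to be subadditive on \emph{commuting} pairs, and two elements of $H$ whose images in $\mathbb{Z}^{2}$ are not proportional never commute in $H$ (their commutator is a nonzero power of $c$). So the descended function on $\mathbb{Z}^{2}$ is subadditive only along each individual line through the origin, with no constraint whatsoever relating different directions. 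This is precisely the freedom the corollary is meant to exploit, and your argument gives it away at the start. Compounding this, your chosen candidate --- the pullback $\ell$ of the Euclidean norm --- is genuinely subadditive on \emph{all} of $H$ (it factors through the abelian quotient), so it is itself a homogeneous semi-norm; being homogeneous, it equals its own stabilization, i.e.\ $\ell=\mathrm{s}\ell$ \emph{is} a stable norm. It therefore cannot witness the statement. The attempted rescue via "polygonality of the unit ball" both changes the statement (the corollary concerns stable norms of arbitrary semi-norms on $H$, not only those controlled by a finite generating set) and is not established by any result in the paper.

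The paper's route uses the non-subadditivity across non-commuting directions directly: define $l$ to be supported on the cyclic subgroups generated by the elements $g_{1,n}$ (with $x=1$, $y=n$, $z=0$), setting $l(g_{1,n}^{k})=|k|\,n^{2}$ and $l=0$ elsewhere. Since distinct $g_{1,n}$ do not commute and no two of them share a common power, this is a well-defined length function (\cite{ye}, Theorem 5.3). It grows quadratically in the matrix entries, whereas the preceding corollary shows every stable norm satisfies $\mathrm{sL}(g)\leq K(|x|+|y|)$, a linear bound; hence $l$ is not a stable norm. If you want to repair your write-up, you should abandon the Euclidean candidate and instead construct a length function violating the linear upper bound, which forces you to use exactly the "subadditive only on commuting pairs" loophole.
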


\begin{proof}
Let 
\begin{equation*}
g_{1,n}=%
\begin{bmatrix}
1 & 1 & 0 \\ 
& 1 & n \\ 
&  & 1%
\end{bmatrix}%
\end{equation*}%
for each integer $n\geq 1.$ Define $l:H\rightarrow \mathbb{R}$ by $%
l(g_{1,n})=n^{2},l(g_{1,n}^{k})=|k|l(g_{1,n})$ and all other elements
mapping to $0.$ This is a well-defined length function (see \cite{ye},
Theorem 5.3). But $l$ grows quadratically, while any stable norm grows
linearly with respect to the matrix entries by the previous corollary.
\end{proof}

\bigskip

\bigskip

NYU Shanghai, No.567 Yangsi West Rd, Pudong New Area, Shanghai, 200124,
China.

NYU-ECNU Institute of Mathematical Sciences at NYU Shanghai, 3663 Zhongshan
Road North, Shanghai, 200062, China

E-mail: sy55@nyu.edu


\begin{thebibliography}{99}
\bibitem{B} S. Blachere, \textit{Word distance on the discrete Heisenberg
group}, Colloq. Math.95 no. 1 (2003), 21--36.

\bibitem{bf} F. Bertolotti, R. Frigerio, \textit{Length functions on mapping
class groups and simplicial volumes of mapping tori}, arXiv:2205.10846, to
appear at Annales de l'Institut Fourier.

\bibitem{bh} M. Bridson and A. Haefliger,\textit{\ Metric spaces of
non-positive curvature}, Grundlehren der Mathematischen Wissenschaften, vol.
319, Springer-Verlag, Berlin, 1999.

\bibitem{cf} D. Calegari, M. Freedman, \textit{Distortion in transformation
groups}, Geometry \& Topology 10 (2006) 267--293.

\bibitem{conner} G. Conner, \textit{Discreteness properties of translation
numbers in solvable groups}, J. Group Theory 3 (2000), no. 1, 77-94.

\bibitem{Tao} T. Fritz, S. Gadgil, A. Khare, P. Nielsen, L. Silberman and T.
Tao, \textit{Homogeneous length functions on groups}, Algebra and Number
Theory, 12 (2018), 1773-1786.

\bibitem{gs} S. M. Gersten and H. B. Short, \textit{Rational subgroups of
biautomatic groups}, Ann. of Math. (2) 134 (1991), no. 1, 125-158.

\bibitem{gw} D. Gromoll and J. Wolf, \textit{Some relations between the
metric structure and the algebraic structure of the fundamental group in
manifolds of nonpositive curvature}, Bul. Amer. Math. Soc. 4 (1971), 545-552.

\bibitem{hu} H. Hu, \textit{Some ergodic properties of commuting
diffeomorphisms}, Ergodic Theory Dynam. Systems 13 (1993), no. 1, 73--100.

\bibitem{hsw} H. Hu, E. Shi, Z. Wang, \textit{Some ergodic and rigidity
properties of discrete Heisenberg group actions}, Israel Journal of
Mathematics 228 (2018), 933-972.

\bibitem{kat} A. Katok and B. Hasselblatt, \textit{Introduction to the
Modern Theory of Dynamical Systems}, Encyclopedia of Mathematics and Its
Applications 54, Cambridge University Press, 1995.

\bibitem{ly} H. Lawson and S. T. Yau, \textit{Compact manifolds of
nonpositive curvature}, J. Differential Geom. 7 (1972), 211-228.

\bibitem{le} E. M. Levi\v{c}, The representation of solvable groups by
matrices over a certain field of characteristic zero. (Russian). Dokl. Akad.
Nauk SSSR, 188 (1969), 520-21.

\bibitem{lr} J. C. Lennox and D. J. S. Robinson, \textit{The theory of
infinite soluble groups}, Oxford University Press, 2004.

\bibitem{p} T. Prytu\l a, \textit{Solvable subgroup theorem for simplicial
nonpositive curvature}, Int. J. Algebra Comput., 28 (4) (2018), pp. 605-611.

\bibitem{Re} V. N. Remeslennikov, \textit{Representation of finitely
generated metabelian groups by matrices}. (Russian). Algebra i Logika, 8
(1969), 72--5.

\bibitem{s} D. Segal. \textit{Polycyclic groups}. Cambridge Tracts in Math.
(Cambridge University Press, 1983).

\bibitem{ye} S. Ye, \textit{Length functions on groups and rigidity},
arXiv:2101.08902, to appear in the Beyond Hyperbolicity/ Artin Groups,
CAT(0) geometry and related topics Conference Proceedings.
\end{thebibliography}
\end{document}